 
\documentclass[11pt, twoside, leqno]{article}

\usepackage{amsmath,amsthm}
\usepackage{amssymb}

\usepackage[applemac]{inputenc}

\usepackage{enumerate}

\usepackage{graphicx}
\usepackage{hyperref}

\pagestyle{myheadings}
\markboth{\'E. Fouvry, C. Levesque and M. Waldschmidt}{Cyclotomic binary forms}

\newtheorem{thm}{\indent Theorem}[section]
\newtheorem{cor}[thm]{\indent Corollary}
\newtheorem{lem}[thm]{\indent Lemma}
\newtheorem{prop}[thm]{\indent Proposition}

\theoremstyle{definition}



\numberwithin{equation}{section}


\frenchspacing

\textwidth=14cm
\textheight=22cm
\parindent=16pt
\oddsidemargin=0.5cm
\evensidemargin=0.5cm
\topmargin=-0.5cm


\def\boxit#1#2{\setbox1=\hbox{\kern#1{#2}\kern#1}%
\dimen1=\ht1 \advance\dimen1 by #1 \dimen2=\dp1 \advance\dimen2 by #1
\setbox1=\hbox{\vrule height\dimen1 depth\dimen2\box1\vrule}%
\setbox1=\vbox{\hrule\box1\hrule}%
\advance\dimen1 by .4pt \ht1=\dimen1
\advance\dimen2 by .4pt \dp1=\dimen2 \box1\relax}

\def\hfl#1#2{\smash{\mathop{\hbox to 12 mm{\rightarrowfill}}
\limits^{\scriptstyle#1}_{\scriptstyle#2}}}

 \def\hlfl#1#2{\smash{\mathop{\hbox to 12 mm{\leftarrowfill}}
\limits^{\scriptstyle#1}_{\scriptstyle#2}}}

\def\phfl#1#2{\smash{\mathop{\hbox to 8 mm{\rightarrowfill}}
\limits^{\scriptstyle#1}_{\scriptstyle#2}}}

 \def\phlfl#1#2{\smash{\mathop{\hbox to 8 mm{\leftarrowfill}}
\limits^{\scriptstyle#1}_{\scriptstyle#2}}}

\def\cqfd{\unskip\kern 6pt\penalty 500
\raise -2pt\hbox{\vrule\vbox to 10pt{\hrule width 4pt
\vfill\hrule}\vrule}\par}

\def\house#1{\setbox1=\hbox{$\,#1\,$}%
\dimen1=\ht1 \advance\dimen1 by 2pt \dimen2=\dp1 \advance\dimen2 by 2pt
\setbox1=\hbox{\vrule height\dimen1 depth\dimen2\box1\vrule}%
\setbox1=\vbox{\hrule\box1}%
\advance\dimen1 by .4pt \ht1=\dimen1
\advance\dimen2 by .4pt \dp1=\dimen2 \box1\relax}

\def\virgule{\raise 2pt \hbox{$,$}}

\def\R{\mathbb{R}}
\def\Z{\mathbb{Z}}

\def\og{\leavevmode\raise.3ex\hbox{$\scriptscriptstyle
\langle\!\langle$}}

\def\fg{\leavevmode\raise.3ex\hbox{$\scriptscriptstyle
\,\rangle\!\rangle$}}

 
 \begin{document}

\baselineskip=17pt

\title{{\bf Representation of integers \\ by cyclotomic binary forms} }

\author{{\sc \' Etienne Fouvry}\\
Laboratoire de Math\'{e}matiques d'Orsay \\
 Universit\'{e} Paris--Sud \\
CNRS, Universit\'{e} Paris--Saclay\\
 F--91405 ORSAY, \sc France\\
 E-mail: etienne.fouvry@u-psud.fr\\
 \null
 \\
 {\sc Claude Levesque}\\
D\'{e}partement de math\'{e}matiques et de statistique$\quad$ \\
 Universit\'{e} Laval, Qu\'{e}bec, Qc \\
Canada G1V 0A6 \\
 E-mail: cl@mat.ulaval.ca\\
 \null
 \\
 {\sc Michel Waldschmidt }\\
 Sorbonne Universit\'es \\
 UPMC Univ Paris 06 \\
 UMR 7586 IMJ-PRG \\
 F--75005 Paris, \sc France \\
 E-mail: michel.waldschmidt@imj-prg.fr}


\maketitle

 \begin{center}

{\large {\sf Dedicated to Robert Tijdeman \\
on the occasion of his $75^{th}$ birthday}}
 \end{center}
 \vskip -1.5cm

\renewcommand{\thefootnote}{}

\footnote{2010 \emph{Mathematics Subject Classification}: Primary 11E76; Secondary 12E10.}

\footnote{\emph{Key words and phrases}: Cyclotomic binary forms, Cyclotomic polynomials, Euler's totient function, Families of Diophantine equations, Thue Diophantine equations, Representation of integers by binary forms. }

\renewcommand{\thefootnote}{\arabic{footnote}}
\setcounter{footnote}{0}

\begin{abstract}
The homogeneous form $\Phi_n(X,Y)$ of degree $\varphi(n)$ which is associated with the cyclotomic polynomial $\phi_n(X)$ is dubbed a {\it cyclotomic binary form}. 
A positive integer $m\ge 1$ is said to be {\it representable by a cyclotomic binary form} if there exist integers $n,x,y$ with $n\ge 3$ and $\max\{|x|, |y|\}\ge 2$ such that $\Phi_n(x,y)=m$. 
We prove that the number $a_m$ of such representations of $m$ by a cyclotomic binary form is finite. More precisely, we have $\,\varphi(n) \le ( {2}/ {\log 3} )\log m\, $ and $\,
\max\{|x|,|y|\} \le ({2}/{\sqrt{3}} )\, m^{1/\varphi(n)}.\,$ We give a description of the asymptotic cardinality of the set of values taken by the forms for $n\geq 3$. This will imply that the set of integers $m$ such that $a_m\neq 0$ has natural density 0. We will deduce that the average value of the integers $a_m$ among the nonzero values of $a_m$ grows like $\sqrt{\log \, m}$. 
\end{abstract}

\section{Introduction} \label{S:Introduction}

 K.~ Gy\H{o}ry obtained in \cite{Gyory} many interesting results on the representation of integers (resp. algebraic integers) by binary forms. He obtained sharp estimates, in contrast with the exponential bounds previously obtained on Thue's equations by means of Baker's results on lower bounds for linear forms in logarithms of algebraic numbers. 
 The bibliography of \cite{Gyory} contains a useful selection of articles dealing with these problems, including \cite{Nagell1} and \cite{Nagell2}. Most particularly, Gy\H{o}ry considered binary forms of degree $d$ with integral coefficients, 
$$
F(X,Y)=a_0X^d+a_1X^{d-1}Y+\cdots+a_{d-1}XY^{d-1}+a_dY^d,
$$ 
which are products of $\ell$ irreducible forms, assuming that the roots of $F(X,1)$ are totally imaginary quadratic numbers over a totally real number field, and he proved that for $m\neq 0$, the solutions $(x,y)\in\Z^2$ of $F(X,Y)=m$ satisfy 
$$
|x|\le 2 |a_d|^{1-(2\ell-1)/d}|m|^{1/d}
\quad\hbox{and}\quad
|y|\le 2 |a_0|^{1-(2\ell-1)/d}|m|^{1/d}.
$$
In other words, the splitting field of each irreducible factor of $F(X,1)$ is a CM-field, {\it i.e.}, a totally imaginary quadratic extension of a totally real number field. In particular, cyclotomic fields are such number fields.

Examples of such binary forms with $a_0=a_d=1$ are given by the cyclotomic binary forms, which we define as follows. 
 
 For $n\ge 1$, denote by $\phi_n(X)$ the cyclotomic polynomial of index $n$ and degree $\varphi(n)$ (Euler's totient function). Following Section 6 of \cite{Nagell2}, the {\it cyclotomic binary form} $\Phi_n(X,Y)$ is defined by $\Phi_n(X,Y)=Y^{\varphi(n)}\phi_n(X/Y)$. In particular, we have $\Phi_n (x,y) >0$ for $n \geq 3$ and $(x,y) \not= (0,0)$ (see \S \ref{cn} below).
 
 In the special case of cyclotomic binary forms, Gy\H{o}ry \cite{Gyory} gives 
$$\max\{|x|,|y|\} \le 2|m|^{1/\varphi(n)}$$
 for the integral solutions $(x,y)$ of $\Phi_n(X,Y) =m$. In contrast with our Theorem $\ref{Thm:1} $ below, Gy\H{o}ry \cite{Gyory} gives an upper bound for $n$ only if $\max\{|x|,|y|\} \ge 3$.

Here is our first main result, in which we exclude the 
cases $n = 1$ and $n = 2$ for which the cyclotomic polynomial $\phi_n$ is linear.

\medskip

\begin{thm}\label{Thm:1}
Let $m$ be a positive integer and let $n,x,y$ be rational integers satisfying $n\ge 3$, $\max\{|x|,|y|\}\ge 2$ and $\Phi_n(x,y)=m$.Then
$$
\varphi(n) \le \frac{2}{\log 3} \log m
\quad\hbox{and}\quad
\max\{|x|,|y|\} \le \frac{2}{\sqrt{3}} \, m^{1/\varphi(n)}.
$$
\end{thm}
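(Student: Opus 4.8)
The plan is to reduce the whole theorem to a single lower bound for the cyclotomic polynomial on $[-1,1]$. Since $\phi_n$ is self‑reciprocal for $n\ge 2$ (that is, $X^{\varphi(n)}\phi_n(1/X)=\phi_n(X)$), the form $\Phi_n$ is symmetric and $\Phi_n(a,b)=a^{\varphi(n)}\phi_n(b/a)$ for $a\ne 0$. As $\varphi(n)$ is even for $n\ge 3$, and $\phi_n(t)=\prod_{\{\zeta,\bar\zeta\}}|t-\zeta|^2>0$ for every real $t$ (the primitive $n$‑th roots $\zeta$ being non‑real), one can write, with $M=\max\{|x|,|y|\}\ge 2$ and $t=y/x$ when $|x|\ge|y|$ and $t=x/y$ otherwise,
$$m=\Phi_n(x,y)=M^{\varphi(n)}\,\phi_n(t),\qquad |t|\le 1.$$
So everything follows from the inequality $\phi_n(t)\ge(\sqrt3/2)^{\varphi(n)}$, valid for all $n\ge 3$ and all real $t\in[-1,1]$: granting it, $m\ge(M\sqrt3/2)^{\varphi(n)}$, hence $\max\{|x|,|y|\}=M\le(2/\sqrt3)\,m^{1/\varphi(n)}$, and, since $M\ge 2$, also $m\ge 3^{\varphi(n)/2}$, i.e. $\varphi(n)\le(2/\log 3)\log m$.

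To prove this inequality I would first dispose of the range $|t|\le 1/2$, which is immediate: each factor satisfies $|t-\zeta|^2=(t-\operatorname{Re}(\zeta))^2+(\operatorname{Im}(\zeta))^2\ge 1-t^2\ge 3/4$, so $\phi_n(t)=\prod_\zeta|t-\zeta|\ge(3/4)^{\varphi(n)/2}$; here the bound is already attained, at $(n,t)=(6,1/2)$ and $(3,-1/2)$, which is why the constant $\sqrt3/2$ is the right one (compare $\Phi_6(2,1)=\Phi_3(2,-1)=3$). For the range $1/2<|t|\le 1$, I would reduce first to $n$ squarefree, using $\phi_n(t)=\phi_{\mathrm{rad}(n)}(t^{n/\mathrm{rad}(n)})$ together with $\varphi(n)=(n/\mathrm{rad}(n))\,\varphi(\mathrm{rad}(n))$ and $\sqrt3/2<1$ (the case $n=2^a$ being trivial, as $\phi_{2^a}(t)=1+t^{2^{a-1}}\ge 1$), and then to $t\in(0,1]$, using $\phi_n(-X)=\phi_{2n}(X)$ for odd $n\ge 3$ and $\phi_{2m}(-X)=\phi_m(X)$ for odd $m\ge 3$, both of which leave $\varphi$ unchanged.

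What remains, and what I expect to be the main difficulty, is squarefree $n\ge 3$ with $t\in(1/2,1)$: here no single factor $|t-\zeta|$ is bounded below in a way uniform in $n$, since the primitive roots nearest $1$ can come arbitrarily close to $t$, so the product must be used globally. I would split on the number $\omega(n)$ of prime factors. If $n=p$ is an odd prime, $\phi_p(t)=1+t+\cdots+t^{p-1}>1\ge(\sqrt3/2)^{p-1}$. If $n=2q$ with $q$ an odd prime, $\phi_{2q}(t)=(1+t^q)/(1+t)$, whose minimum on $(0,1)$ is attained at the unique critical point $t^*$ and equals $q/\bigl(q+(q-1)t^*\bigr)$, so an elementary estimate of $t^*$ yields the bound. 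For every remaining squarefree $n$ one has $\varphi(n)\ge 8$, so $(\sqrt3/2)^{\varphi(n)}$ is already small, and it suffices to argue by induction on $\omega(n)$ through $\phi_n(t)=\phi_{n/p}(t^p)/\phi_{n/p}(t)$, with $p$ the largest prime factor of $n$ — bounding the numerator below by the inductive hypothesis and the denominator above by an elementary estimate; the slack in $(\sqrt3/2)^{\varphi(n)}$ makes this comfortably enough. The point throughout is that, away from the easy range $|t|\le 1/2$, the inequality is never an equality, so one only ever needs enough room, not the exact extremum; organising the case analysis around this is the heart of the argument.
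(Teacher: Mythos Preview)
Your overall plan coincides with the paper's: reduce Theorem~1.1 to the single inequality
\[
c_n:=\inf_{-1\le t\le 1}\phi_n(t)\ \ge\ (\sqrt3/2)^{\varphi(n)}\qquad(n\ge 3),
\]
which is exactly the paper's Proposition~4.3. The reduction via self-reciprocity, the passage to $n$ squarefree, and the sign trick $\phi_n(-X)=\phi_{2n}(X)$ are all correct and used in the paper as well.

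There is, however, a genuine gap in your ``easy'' range $|t|\le 1/2$. The pointwise factor bound you assert,
\[
|t-\zeta|^2=(t-\operatorname{Re}\zeta)^2+(\operatorname{Im}\zeta)^2\ \ge\ 1-t^2,
\]
is false in general: since $|\zeta|=1$ this quantity equals $1+t^2-2t\operatorname{Re}\zeta$, and $1+t^2-2t\operatorname{Re}\zeta\ge 1-t^2$ is equivalent to $t(t-\operatorname{Re}\zeta)\ge 0$, which fails whenever $0<t<\operatorname{Re}\zeta$. Concretely, for $n=12$, $\zeta=e^{i\pi/6}$ and $t=1/2$ one gets $|t-\zeta|^2=5/4-\sqrt3/2\approx 0.384<3/4$. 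So the per-factor argument does not give $\phi_n(t)\ge(3/4)^{\varphi(n)/2}$ on $|t|\le 1/2$; the small factors near $\zeta\approx 1$ must be compensated by the large ones near $\zeta\approx -1$, and that compensation is precisely the nontrivial content of the lemma. Your later sketch (primes, $2q$, induction on $\omega(n)$ via $\phi_n(t)=\phi_{n/p}(t^p)/\phi_{n/p}(t)$) only treats $t\in(1/2,1)$, so as written the range $|t|\le 1/2$ is not covered.

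For comparison, the paper does not split at $|t|=1/2$ at all. It first proves, for odd squarefree $n=p_1\cdots p_r$ with $p_1<\cdots<p_r$, the uniform bound $c_n\ge p_1^{-2^{r-2}}$ by writing $\phi_n$ via M\"obius as a product of $2^{r-1}$ factors $\phi_{p_1}(t^d)^{\pm 1}$ and using only the crude estimates $1/2\le\phi_{p_1}(u)\le 1$ on $[-1,0]$ and $1\le\phi_{p_1}(u)\le p_1$ on $[0,1]$. It then shows $\varphi(n)>2^{r+1}\log p_1$ for all such $n\notin\{3,5,7,15\}$, which together with $\log(2/\sqrt3)>1/8$ yields $c_n\ge e^{-\varphi(n)/8}\ge(\sqrt3/2)^{\varphi(n)}$; the four exceptional $n$ are checked numerically. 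This avoids any per-root-of-unity estimate and handles all $t\in[-1,1]$ at once.
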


In particular, there is no solution when $m\in\{1,2\}$.
\par

From the following lower bound for $\varphi(n)$, proved in six lines in \cite{MW}, namely 
$$
\varphi(n)>\left( \frac{n}{2.685}\right)^{1/1.161},
$$
we deduce that the upper bound $\varphi(n) <2(\log m)/\log 3$ of Theorem $\ref{Thm:1} $ implies
\begin{equation}\label{Equation:1.161}
n<5.383 (\log\, m)^{1.161}.
\end{equation}

Theorem $\ref{Thm:1}$ is a refinement of Gy\H ory's above mentioned result for these cyclotomic binary forms. 
Subject to $\mbox{\sc gcd}(x,y)=1$, Nagell (see Lemma 1, p. 152 of \cite{Nagell1}) comes up with a slightly larger bound than ours for $\varphi(n)$, namely he has
$\varphi(n) <(4\log m)/(3 \log 2)$, and he does not exhibit a bound for 
$ \max\{|x|,|y|\}$.

The estimates of Theorem $\ref{Thm:1}$ are optimal because for $\ell\ge 1$, 
$$
\Phi_3(\ell,-2\ell)=3\ell^2.
$$
If we assume $\varphi(n)>2$, namely $\varphi(n) \geq 4$,
the conclusion of Theorem $\ref{Thm:1}$ can be replaced by 
$$
\varphi(n) \le \frac{4}{\log11} \log m
\quad\hbox{and}\quad
\max\{|x|,|y|\} \le \frac{2}{\root 4 \of {11}} \, m^{1/\varphi(n)}
$$
thanks to
($\ref{Equation:sqrt3maxxy}$).
Again these estimates are best possible since for $\ell \geq 1$, we have 
$\Phi_5(\ell,-2\ell)=11\ell^4$.

There are infinitely many integers $n$ such that $\Phi_n(1,2)<2^{\varphi(n)}$; for instance, $n=2\cdot 3^e$ with $e\ge 1$. 
We will prove the following.

\begin{thm}\label{Thm:1suite}
For $\theta \in \; ]0,1[$, there are only finitely many triples $(n,x,y)$ with $n\geq 3$ and $\max\{|x|,|y|\}\ge 2$, such that $\Phi_n(x,y)\le 2^{\theta \varphi(n)}$; these triples can be effectively determined and they satisfy $\max\{|x|,|y|\} = 2$. 
\end{thm}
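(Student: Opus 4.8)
The plan is to use Theorem~\ref{Thm:1} to force $\max\{|x|,|y|\}=2$, then to list the finitely many resulting pairs $(x,y)$, and finally to bound $n$ by showing that each of the resulting one‑variable specialisations of $\Phi_n$ is $\ge c_0\,2^{\varphi(n)}$ for an absolute constant $c_0$; since $2^{\theta\varphi(n)}<2^{\varphi(n)}$, this confines $\varphi(n)$, hence $n$, to an explicitly bounded range. First I would observe that if $\Phi_n(x,y)=m\le 2^{\theta\varphi(n)}$ then, by the second inequality of Theorem~\ref{Thm:1},
$$
\max\{|x|,|y|\}\ \le\ \frac{2}{\sqrt 3}\,m^{1/\varphi(n)}\ \le\ \frac{2}{\sqrt 3}\,2^{\theta}\ <\ \frac{4}{\sqrt 3}\ <\ 3 .
$$
As $\max\{|x|,|y|\}$ is an integer that is $\ge 2$ by hypothesis, it equals $2$; thus $(x,y)\in\{-2,-1,1,2\}^2$ with at least one coordinate $\pm 2$. (The same inequality gives $m\ge(\sqrt 3)^{\varphi(n)}$, so the set is already empty for $\theta<\log_2\sqrt 3$; the real work is the range with $\theta$ close to $1$, where Theorem~\ref{Thm:1} alone is not enough.)

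Next I would reduce these twelve admissible pairs to four values of $\Phi_n$. Since $\phi_n$ is self‑reciprocal for $n\ge 2$ one has $\Phi_n(X,Y)=\Phi_n(Y,X)$, and since $\varphi(n)$ is even for $n\ge 3$ one has $\Phi_n(-X,-Y)=\Phi_n(X,Y)$; moreover reciprocity gives $\Phi_n(1,2)=2^{\varphi(n)}\phi_n(1/2)=\phi_n(2)$ and $\Phi_n(-1,2)=2^{\varphi(n)}\phi_n(-1/2)=\phi_n(-2)$. Hence $\Phi_n(x,y)$ lies in $\{\phi_n(2),\ \phi_n(-2),\ 2^{\varphi(n)}\phi_n(1),\ 2^{\varphi(n)}\phi_n(-1)\}$. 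The standard identities $\phi_n(-X)=\phi_{2n}(X)$ for odd $n>1$, $\phi_n(-X)=\phi_{n/2}(X)$ for $n\equiv 2\bmod 4$, and $\phi_n(-X)=\phi_n(X)$ for $4\mid n$ show that $\phi_n(-2)=\phi_m(2)$ for some $m\ge 3$ with $\varphi(m)=\varphi(n)$. Since $\phi_n(\pm 1)\ge 1$ for $n\ge 3$, the pairs producing $2^{\varphi(n)}\phi_n(\pm 1)\ge 2^{\varphi(n)}>2^{\theta\varphi(n)}$ give no solution, so everything comes down to a lower bound for $\phi_n(2)$.

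For this I would use $\prod_{\zeta^{n}=1}(1-\zeta/2)=1-2^{-n}$ together with Möbius inversion over the divisors of $n$, which give
$$
\frac{\phi_n(2)}{2^{\varphi(n)}}=\prod_{\zeta}\Bigl(1-\frac{\zeta}{2}\Bigr)=\prod_{d\mid n}\bigl(1-2^{-d}\bigr)^{\mu(n/d)},
$$
the first product running over the primitive $n$‑th roots of unity; hence $\bigl|\log\bigl(\phi_n(2)/2^{\varphi(n)}\bigr)\bigr|\le\sum_{d\ge 1}\bigl(-\log(1-2^{-d})\bigr)=-\log c_0$, where $c_0:=\prod_{k\ge 1}(1-2^{-k})$ (so $1/4<c_0<1/3$). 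Therefore $\phi_n(2)\ge c_0\,2^{\varphi(n)}$, and likewise $\phi_n(-2)=\phi_m(2)\ge c_0\,2^{\varphi(m)}=c_0\,2^{\varphi(n)}$. Every solution thus satisfies $c_0\,2^{\varphi(n)}\le\Phi_n(x,y)\le 2^{\theta\varphi(n)}$, i.e. $\varphi(n)\le \log(1/c_0)/\bigl((1-\theta)\log 2\bigr)$, an explicit bound; together with $\varphi(n)>(n/2.685)^{1/1.161}$ this bounds $n$ explicitly, and for each admissible $n$ the constraint $\max\{|x|,|y|\}=2$ leaves finitely many pairs $(x,y)$ to test. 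This proves finiteness, makes the list effective, and shows that every such triple has $\max\{|x|,|y|\}=2$. The step demanding genuine care is the uniform estimate $\phi_n(\pm 2)\ge c_0\,2^{\varphi(n)}$ (and the bookkeeping with the reciprocity and parity identities); the rest is a finite verification.
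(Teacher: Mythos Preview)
Your argument is correct and takes a genuinely different route from the paper's. For the first step, you invoke Theorem~\ref{Thm:1} directly to get $\max\{|x|,|y|\}\le (2/\sqrt 3)\,2^{\theta}<3$; the paper instead establishes the stronger intermediate claim that already $\Phi_n(x,y)<7^{\varphi(n)/2}$ forces $\max\{|x|,|y|\}=2$, which it verifies for $n\in\{3,5,7,15\}$ by computation and for the remaining odd squarefree $n$ via Proposition~\ref{Prop:cn} and Lemma~\ref{Lemma:MinorationPhin}. Your shortcut is cleaner for the theorem as stated; the paper's detour buys the sharper $7^{\varphi(n)/2}$ threshold announced after Theorem~\ref{Thm:1suite}. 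For the finiteness of $n$, you reduce to the single estimate $\phi_m(2)\ge c_0\,2^{\varphi(m)}$ with $c_0=\prod_{k\ge 1}(1-2^{-k})$, obtained from the M\"obius product $\phi_n(2)/2^{\varphi(n)}=\prod_{d\mid n}(1-2^{-d})^{\mu(n/d)}$; the paper instead bounds the invariant $c_n=\inf_{t}\phi_n(t)$ via Proposition~\ref{Prop:cn} to obtain $(1-\theta)(\log 2)\,\varphi(n)\le 2^{r-2}\log p_1$ and then argues that this confines $n$. Your route is more elementary and yields a single explicit numerical constant, while the paper's stays within its $c_n$ framework. One small omission: when listing the admissible pairs you exclude coordinates equal to $0$, but $(x,y)$ with a zero coordinate and $\max\{|x|,|y|\}=2$ gives $\Phi_n(x,y)=2^{\varphi(n)}>2^{\theta\varphi(n)}$, so these are harmless; it is worth saying so explicitly.
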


As a matter of fact, we shall see that the conclusion $\max\{|x|,|y|\} = 2$ follows from the weaker assumption 
 $$
\Phi_n(x,y)<7^{\varphi(n)/2},
$$
which is optimal since $\Phi_3(1,-3)=7$.
 
Theorem $\ref{Thm:1}$ shows that, for each integer $m\ge 1$, the set 
$$
\big\{(n,x,y)\in\Z^3\; \mid \; n\ge 3,\; \max\{|x|,|y|\}\ge 2, \; \Phi_n(x,y)=m\big\}
$$
is finite. The finiteness of the subset of $(n,x,y)$ subject to the stronger condition $\max\{|x|,|y|\}\ge 3$ follows from \cite{Gyory}, but not for $ \max\{|x|,|y|\}\ge 2$.
Let us denote by $a_m$ the number of elements in the above set. 
The positive integers $m$ such that $a_m\ge 1$ are the integers which are represented by a cyclotomic binary form. 
We will see in \S\ref{S:NumericalComputations} that the sequence of integers $m\ge 1$ such that $a_m\ge 1$ starts with the following values of $a_m$: 
 
\begin{center}
\noindent
\begin{tabular}
{| c || c | c | c| c| c| c| c|c|c|c|c|c|c|c|c|c|c|c|}
 \hline 
$m$ & $3$ & $4$ & $5$ & $7$ &$8$ & $9$& $10$&$11$&$12$&$13$&$16$&$17$&$18$&$19$&$20$
\\ \hline 
 $a_m$& $8$ & $16$ & $8$ & $24$&$4$ &$16$& $8$&$8$&$12$&$40$&$40$&$16$&$4$&$24$&$8$
 \\ \hline 
\end{tabular}\bigskip

{\bf Table 1}
\end{center} 

\noindent The only result in this direction that we found in the literature is $a_1=0$:
see \cite{Gyory,Nagell1,Nagell2}.

For $N\ge 1$ and $n\geq 3$ let $\mathcal A (\Phi_n;N)$ be the set of positive integers $m\leq N$ which are in a restricted image of $\mathbb Z^2$ by $\Phi_n $. In other words, for $n\geq 3$ we define
\begin{multline*}
\mathcal A (\Phi_n; N) := \bigl\{ m\in \mathbb N\,\mid \, m\leq N,\, m= \Phi_n (x,y) \ \text{ for some } (x,y)\in \mathbb Z^2\\ \text{ with }
\max (\vert x \vert , \vert y \vert) \geq 2
\bigr\}.
\end{multline*}
The following theorem describes the asymptotic cardinality of the set of values taken by the polynomials $\Phi_n$ for $n\geq 3$. Defining
$$
\mathcal A( \Phi_{\{n\geq 3\}}; N) := \bigcup_{n\geq 3} \mathcal A (\Phi_n; N),
$$
we have

\begin{thm}\label{Thm:2}
There exist two sequences $\,(\alpha_h)\,$ and $\,(\beta_h)\,$ (with $\alpha_0 >0$ and $\beta_0 >0$), such that for every 
$M \geq 0$, the following equality holds uniformly for $N \geq 2$:
\begin{multline}\label{infexpan}
\bigl\vert \,\mathcal A( \Phi_{\{n\geq 3\}}; N)\, \bigr\vert =
 \frac{N}{(\log N)^\frac{1}{2}} 
 \left\{ 
\left( \, \alpha_0 - \frac{\beta_0}{(\log N)^\frac{1}{4}}\,\right) + 
 \frac{1}{\log N} \left( \, \alpha_1 - \frac{\beta_1}{(\log N)^\frac14}\, \right) + \cdots \right.\\[4mm] 
+\left. \frac{1}{(\log N)^M}
 \left( \, \alpha_M - \frac{\beta_M}{(\log N)^\frac14}\, \right) +
 O \left( \frac{1}{(\log N)^{M+1}}\right)
 \right\}.
\end{multline} 
\end{thm}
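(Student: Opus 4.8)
The plan is to reduce the count $\bigl\vert \mathcal A(\Phi_{\{n\geq 3\}};N)\bigr\vert$ to a count governed by the two smallest cyclotomic forms and then apply a Landau–Selberg–Delange type asymptotic. The first step is to show that, up to a negligible error, only $n=3$ and $n=4$ matter. Indeed by Theorem~\ref{Thm:1} any $m\le N$ represented by $\Phi_n$ with $\max(|x|,|y|)\ge 2$ forces $\varphi(n)\le (2/\log 3)\log N$, and for $\varphi(n)\ge 4$ the sharper bound $\varphi(n)\le(4/\log 11)\log N$ together with $\max(|x|,|y|)\le (2/\sqrt[4]{11})\,m^{1/\varphi(n)}\le (2/\sqrt[4]{11})\,N^{1/4}$ shows that the number of integers represented by any single such $\Phi_n$ is $O(N^{1/2})$, and the number of admissible $n$ is $O(\log N)$, so $\bigcup_{n:\varphi(n)\ge 4}\mathcal A(\Phi_n;N)$ has cardinality $O(N^{1/2}\log N)=o\bigl(N/(\log N)^{1/2}\bigr)$. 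Hence
$$
\bigl\vert \mathcal A(\Phi_{\{n\geq 3\}};N)\bigr\vert = \bigl\vert \mathcal A(\Phi_3;N)\cup\mathcal A(\Phi_4;N)\bigr\vert + O\!\left(N^{1/2}\log N\right).
$$

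Next I would identify $\mathcal A(\Phi_3;N)$ and $\mathcal A(\Phi_4;N)$ with classical sets. Since $\Phi_3(x,y)=x^2+xy+y^2$ and $\Phi_4(x,y)=x^2+y^2$, these are, respectively, the integers of the form $a^2+ab+b^2$ and $a^2+b^2$ (the side condition $\max(|x|,|y|)\ge 2$ only removes finitely many small exceptions, absorbed into the error term). By the classical characterization via splitting of primes in $\mathbb Z[\zeta_3]$ and $\mathbb Z[i]$, membership is a multiplicative condition on the prime factorization: $m=a^2+b^2$ iff every prime $p\equiv 3\pmod 4$ divides $m$ to an even power, and $m=a^2+ab+b^2$ iff every prime $p\equiv 2\pmod 3$ divides $m$ to an even power. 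The counting function of each such set satisfies a Landau-type asymptotic $\sim c\,N/(\log N)^{1/2}$, and more precisely admits a full asymptotic expansion in powers of $1/\log N$; this is the Selberg–Delange method applied to the Dirichlet series $\prod_p(1-p^{-s})^{-1}$ restricted by the congruence conditions, whose local factors produce an analytic function times $(s-1)^{-1/2}$. The union is handled by inclusion–exclusion: $|\mathcal A(\Phi_3;N)\cup\mathcal A(\Phi_4;N)| = |\mathcal A(\Phi_3;N)| + |\mathcal A(\Phi_4;N)| - |\mathcal A(\Phi_3;N)\cap\mathcal A(\Phi_4;N)|$, and the intersection is again defined by a multiplicative (congruence) condition on prime exponents — primes $\equiv 3\pmod 4$ and primes $\equiv 2\pmod 3$ both to even powers — so the same method gives its asymptotic expansion.

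The subtle point, and what explains the peculiar $1/(\log N)^{1/4}$ secondary terms in \eqref{infexpan}, is that the relevant Dirichlet series is not merely $(s-1)^{-1/2}$ times something holomorphic near $s=1$: imposing "even power" constraints at a positive-density set of primes produces a singularity of the shape $(s-1)^{-3/4}$ (a main term of order $N/(\log N)^{1/2}$ together with a correction of order $N/(\log N)^{3/4}$), which after the Selberg–Delange contour analysis yields exactly the two interleaved expansions $\sum \alpha_h (\log N)^{-h}$ and $\sum \beta_h (\log N)^{-h-1/4}$ multiplying $N/(\log N)^{1/2}$. Concretely one writes, for the $\Phi_4$-set, $\sum_{m\in\mathcal A(\Phi_4)} m^{-s} = \zeta(s)^{1/2}L(s,\chi_{-4})^{1/2}G(s)\cdot H(s)$ where the "even-power" condition at the inert primes contributes a factor behaving like $\prod_{p\equiv 3(4)}(1-p^{-2s})^{-1/2}\sim \zeta(2s)^{?}\cdots$ — the careful bookkeeping of these Euler factors is the main technical obstacle — and then applies the effective Selberg–Delange theorem (e.g. Tenenbaum, Thm.~II.5.2–II.5.3) to extract the expansion to any order $M$ with the stated uniform error $O((\log N)^{-M-1})$. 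The positivity $\alpha_0>0$ is immediate since the leading density is a positive Euler product; the positivity $\beta_0>0$ requires checking the sign of the first correction coefficient coming from the $(s-1)^{-3/4}$ term, which is where one must be careful that the secondary singularity genuinely contributes with the stated sign rather than cancelling.
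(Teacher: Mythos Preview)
Your overall strategy is exactly the paper's: reduce to $n\in\{3,4\}$ via an $O(N^{1/2})$-type bound on the contribution from $\varphi(n)\ge 4$, apply inclusion--exclusion to $\mathcal A(\Phi_3;N)\cup\mathcal A(\Phi_4;N)$, and run Selberg--Delange on each of the three resulting Dirichlet series. The individual sets give $z=1/2$ and hence $N/(\log N)^{1/2}$ with a full expansion in $1/\log N$; the intersection supplies the secondary $N/(\log N)^{3/4}$ scale.

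However, your ``subtle point'' paragraph contains a genuine confusion that would derail the computation if you followed it. The Dirichlet series for the \emph{intersection} $\tilde{\mathcal A}(\Phi_3)\cap\tilde{\mathcal A}(\Phi_4)$ is not governed by a singularity $(s-1)^{-3/4}$; rather, using that the primes $p\equiv 1\bmod 12$ (those split in both $\Q(i)$ and $\Q(\sqrt{-3})$) have density $1/4$, one factors it as $\zeta(s)^{1/4}$ times a function holomorphic and nonvanishing in a standard zero-free region. Selberg--Delange with $z=1/4$ then yields a count $\sim \beta_0\, N(\log N)^{z-1}=\beta_0\, N/(\log N)^{3/4}$. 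You should not try to extract both scales $N/(\log N)^{1/2}$ and $N/(\log N)^{3/4}$ from a single Dirichlet series with some compound singularity; they arise from three \emph{separate} applications of Selberg--Delange (two with $z=1/2$, one with $z=1/4$), assembled afterwards by inclusion--exclusion. Finally, the positivity $\beta_0>0$ needs no delicate sign-checking: $\beta_0$ is by construction the leading constant in the asymptotic for $\bigl\vert\tilde{\mathcal A}(\Phi_3;N)\cap\tilde{\mathcal A}(\Phi_4;N)\bigr\vert$, and equals a positive Euler product times positive $L$-values divided by $\Gamma(1/4)>0$; the minus sign in \eqref{infexpan} simply records the subtraction in inclusion--exclusion.
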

\eject 

The proof of this theorem will be given in \S \ref{proofthm13} with the precise definitions of the coefficients $\alpha_0$ and $\beta_0$. 
This proof will show that the largest contribution to $\bigl\vert \,\mathcal A( \Phi_{\{n\geq 3\}}; N)\, \bigr\vert $ comes from the sets 
$ \mathcal A( \Phi_{ 3}; N) $ and $ \mathcal A( \Phi_{ 4}; N) $.

It follows from Theorem $\ref{Thm:2}$ that the set of integers $m$ such that $a_m\not=0$ has natural density $0$. Combining Theorem $\ref{Thm:2}$ with Lemma \ref{Lemma:varphi>2}, we will deduce that the average value of $a_m$ among the nonzero values of $a_m$ grows like $ \sqrt{\log m}$. More precisely, we have the following.
 
 \begin{cor}\label{Corollary}
For $N\ge 1$, define $A_N$ and $M_N$ by 
$$
A_N = \bigl\vert \,\mathcal A( \Phi_{\{n\geq 3\}}; N)\, \bigr\vert \;\mbox{ and } \; 
M_N=\frac{1}{A_N}(a_1+a_2+\cdots+a_N) .
$$
Then there exists a positive absolute constant $\kappa_1$ such that 
$$
M_N\sim \kappa_1 \sqrt{\log N}\, .
$$
 \end{cor}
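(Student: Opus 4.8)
The plan is to combine the asymptotic formula of Theorem~\ref{Thm:2} for $A_N$ with an asymptotic evaluation of the partial sums $S_N := a_1 + a_2 + \cdots + a_N$ of the representation counts. The key observation is that $S_N$ counts, with multiplicity, the triples $(n,x,y)$ with $n \ge 3$, $\max\{|x|,|y|\} \ge 2$ and $\Phi_n(x,y) \le N$; that is,
$$
S_N = \sum_{n \ge 3} \,\bigl\vert\{ (x,y) \in \mathbb{Z}^2 \;\mid\; \max\{|x|,|y|\} \ge 2,\ \Phi_n(x,y) \le N \}\bigr\vert.
$$
By Theorem~\ref{Thm:1}, the sum over $n$ is effectively finite: only $n$ with $\varphi(n) \le (2/\log 3)\log N$ contribute. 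First I would show that the dominant contribution comes from a single value of $n$ (I expect $n=3$, since $\Phi_3(x,y)=x^2+xy+y^2$ is the cyclotomic form of smallest degree, giving the most lattice points in a region of fixed size), and that the total count of lattice points $(x,y)$ with $0 < \Phi_n(x,y) \le N$ is asymptotic to $c_n N^{2/\varphi(n)}$ for a constant $c_n$ equal to the area of the region $\{\Phi_n(x,y) \le 1\}$, by a standard lattice-point counting argument (the boundary being a smooth curve, the error term is of lower order). For $\varphi(n) \ge 4$ this is $O(N^{1/2})$, and for $\varphi(n)=2$, i.e. $n \in \{3,4,6\}$, it is of order $N$. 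So $S_N \sim C N$ for an explicit positive constant $C = c_3 + c_4 + c_6$ (minus a negligible adjustment for the excluded condition $\max\{|x|,|y|\}\le 1$ and for overlaps, both $O(1)$).

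Once $S_N \sim C N$ and, by Theorem~\ref{Thm:2}, $A_N \sim \alpha_0 N / \sqrt{\log N}$, the Corollary follows immediately:
$$
M_N = \frac{S_N}{A_N} \sim \frac{C N}{\alpha_0 N / \sqrt{\log N}} = \frac{C}{\alpha_0}\sqrt{\log N},
$$
so $\kappa_1 = C/\alpha_0 > 0$. The reference to Lemma~\ref{Lemma:varphi>2} in the text presumably supplies exactly the separation needed to confine the main term of $S_N$ to the degree-two forms and to control the tail $\sum_{\varphi(n) \ge 4}$; I would invoke it to bound that tail by $O(N^{1/2} \log N)$ or similar, which is $o(N)$ and hence absorbed.

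The main obstacle I anticipate is not the lattice-point count per se — that is classical once one knows $\Phi_n(x,y) \le 1$ defines a bounded convex (or at least Jordan-measurable with rectifiable boundary) region, which holds since $\Phi_n(x,y) > 0$ for $(x,y) \ne (0,0)$ as noted in the excerpt — but rather justifying the interchange of the sum over $n$ with the asymptotics, i.e. showing that the number of admissible $n$ (which grows like a power of $\log N$ by \eqref{Equation:1.161}) times the per-$n$ error terms stays $o(N)$. One must check that the implied constants in the lattice-point estimate for $\Phi_n$ do not blow up too fast with $n$; a crude uniform bound of the shape $\bigl\vert\{\Phi_n(x,y) \le N\}\bigr\vert \ll N^{2/\varphi(n)} + N^{1/\varphi(n)}\log N$, with an absolute implied constant, suffices and can be obtained from Theorem~\ref{Thm:1}'s bound $\max\{|x|,|y|\} \le (2/\sqrt3) N^{1/\varphi(n)}$ by simply counting lattice points in that box. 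Summing the $N^{2/\varphi(n)}$ terms over $\varphi(n) \ge 4$ gives $O(N^{1/2} \log N)$, and the box bound handles everything uniformly, so the interchange is legitimate and the result stands.
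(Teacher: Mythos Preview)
Your approach is correct and matches the paper's proof: interpret $a_1+\cdots+a_N$ as a count of triples $(n,x,y)$, use Lemma~\ref{Lemma:varphi>2} to show the contribution from $\varphi(n)>2$ is $O(N^{1/2})=o(N)$, obtain the main term $CN$ from the three quadratic cases $n\in\{3,4,6\}$ by the Gauss circle/ellipse lattice-point count, and then divide by $A_N\sim\alpha_0 N/\sqrt{\log N}$ from Theorem~\ref{Thm:2}. The only minor slip is your initial expectation that a \emph{single} $n$ dominates --- all three of $n=3,4,6$ contribute at order $N$, as you then correctly write in $C=c_3+c_4+c_6$ --- and your worries about uniformity in $n$ are unnecessary since Lemma~\ref{Lemma:varphi>2} already bounds the entire tail directly.
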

 
In particular, the sequence $(a_m)_{m\ge 1}$ is unbounded; this follows from the fact that the number of representations of a positive integer by the quadratic form $\Phi_4(X,Y)$ is an unbounded sequence. The same is true for the quadratic forms $\Phi_3(X,Y)$ and $\Phi_6(X,Y)$. 

In Lemma \ref{Lemma:varphi>2}, we will prove that the number $C_N$ of integers $\le N$ which are represented by a binary form $\Phi_n(X,Y)$ with $\varphi(n)>2$ and $\max\{|x|,|y|\}\ge 2$ is less than
$$
\kappa_2 N^{\frac 12} 
$$
where $\kappa_2$ is a positive absolute constant.

For $m\ge 1$, denote by $b_m$ the number of elements in the set
$$
\big\{(n,x,y)\in\Z^3\; \mid \; \varphi(n)>2 ,\; \max\{|x|,|y|\}\ge 2, \; \Phi_n(x,y)=m\big\}.
$$
 We will see in the last section that for $m$ between 1 and 100, there are exactly 16 values of $m$ for which $b_m$ is different from 0; they are the following ones:\smallskip 

\begin{center}
\begin{small}
\noindent
\begin{tabular}
{| c || c | c | c| c|c|c|c|c|c|c|c|c|c|c|c| c|c|}
 \hline 
$m$ & $11$ & $13$ & $16$ & $17$&$31$&$32$&$43$&$55$& $57$&$61$&$64$&$73$&$80$&
$81$&$82$&$97$\\[2mm]
 \hline 
 $b_m$& $8$ & $8$ & $24$ & $8$ &$8$&$4$&$8$&$8$&$8$&$16$&$24$&$16$&$4$&$24$&$8$&$8$\\[1mm]
 \hline 
\end{tabular}
\end{small}
\end{center}
\centerline{{\bf Table 2}}
\smallskip
 
\begin{lem} \label{Lemme:bmubnbounded}
We have
$$
\limsup_{m\rightarrow\infty}\frac{b_m\log\log\log m}{\log\log m}\ge 8
$$
whereupon the sequence $(b_m)_{m\ge 1}$ is unbounded. 
\end{lem}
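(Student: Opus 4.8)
The plan is to exhibit an explicit infinite family of integers $m$ for which $b_m$ is large, where "large" is measured against $(\log\log m)/\log\log\log m$. The natural source of many representations $\Phi_n(x,y)=m$ with $\varphi(n)>2$ is to take $(x,y)=(1,-1)$ (or $(\pm1,\pm1)$, which is where the factor $8$ will come from, via the symmetries $\Phi_n(x,y)=\Phi_n(y,x)$ for $n\ge 3$ and $\Phi_n(-x,-y)=\Phi_n(x,y)$ since $\varphi(n)$ is even), and to recall that $\Phi_n(1,-1)=\phi_n(1)$. The classical evaluation of the cyclotomic polynomial at $1$ gives $\phi_n(1)=p$ if $n$ is a prime power $p^k$ with $k\ge 1$, and $\phi_n(1)=1$ otherwise. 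This alone does not produce collisions, so instead I would look at products: if $n=q$ is an odd prime then $\Phi_q(1,-1)=1$? No — one must be careful, $\phi_q(1)=q$ for $q$ prime. The key arithmetic input I will use is that $\phi_{p^k}(1)=p$, so every prime $p$ is hit by infinitely many indices $n=p,p^2,p^3,\dots$, but all with $\max\{|x|,|y|\}=1$, which is excluded. Hence $(1,-1)$ is not admissible and I need genuinely larger $(x,y)$.

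The workable approach is therefore the following. Fix a value $x$ and consider $\Phi_n(x,1)=\phi_n(x)$ as $n$ ranges over divisors-related indices; the relevant identity is $x^n-1=\prod_{d\mid n}\phi_d(x)$, so a single integer of the form $N=x^n-1$ is divisible by $\phi_d(x)$ for every $d\mid n$, but that gives representations of different values, not of one fixed $m$. What actually gives many representations of one fixed $m$ is the observation that $\Phi_n(x,y)$ and $\Phi_n(x',y')$ can coincide for the \emph{same} $n$ via the unit-action and transposition symmetries, plus the phenomenon that for $n$ with many prime factors $\phi_n$ takes small values. Concretely, I would choose $m_k=\phi_{n_k}(2)\cdot(\text{something})$... — let me instead follow the cleanest route: take $m$ to be a value $\Phi_n(2,1)=\phi_n(2)$ for a carefully chosen $n$ having $\varphi(n)>2$, and show that the number of pairs $(d,(x,y))$ with $\Phi_d(x,y)=m$ is forced to be $\gg \log\log m/\log\log\log m$ because $m=\phi_n(2)$ is divisible by many distinct prime-power-order cyclotomic values, and each such divisor, being itself of the form $\Phi_{d}(x,y)$ with suitable small $x,y$, contributes. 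The factor $8$ comes from: for each admissible index $d$ one gets the $8$ solutions $(\pm x,\pm y)$ and $(\pm y,\pm x)$ whenever $\{|x|,|y|\}=\{1,2\}$ or more generally $x\ne\pm y$, $xy\ne 0$; and the count of admissible indices $d$ will be driven to infinity along the subsequence.

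The heart of the matter — and the step I expect to be the main obstacle — is producing the lower bound on the \emph{number of admissible indices} $d$ (those with $\varphi(d)>2$) for which a single $m$ in the family satisfies $\Phi_d(x,y)=m$ with some $\max\{|x|,|y|\}\ge 2$. My plan for this is a counting/pigeonhole argument on prime factorizations: among integers $m\le M$ one knows (Hardy–Ramanujan / Erd\H{o}s–Kac, or just the elementary fact that the normal order of $\omega(m)$ is $\log\log m$, together with $\max_{m\le M}\omega(m)\sim \log M/\log\log M$) that some $m\le M$ has $\omega(m)\ge (1-o(1))\log M/\log\log M$; on the other hand each such $m$, to be a value of a cyclotomic form $\Phi_n$ with $\varphi(n)>2$, must have its prime factors constrained, and conversely a well-chosen $m$ of the form $\prod_{j} \Phi_{n_j}(x_j,y_j)$ inherits $\gg \omega(m)$ distinct representations. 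Matching $\omega(m)\asymp \log\log m/\log\log\log m$ is exactly where the triple logarithm enters, so the construction must be tuned (e.g. $m$ a product of the first $r$ primes congruent to $1\pmod 3$, using $\Phi_3$ and $\Phi_6$ to realize each prime $p\equiv1\pmod 3$ as $p=\Phi_3(x,y)=x^2+xy+y^2$ with a nontrivial solution, hence $\max\{|x|,|y|\}\ge 2$ and the full orbit of size $8$ or $12$). Then $\log m\asymp$ the $r$-th primorial so $\log\log m\asymp r\log r$ and $\log\log\log m\asymp \log r$, giving $b_m\ge 8\,\omega(m)=8r\asymp 8\log\log m/\log\log\log m$, which yields the claimed $\limsup\ge 8$; the cleaner route uses $\Phi_4(x,y)=x^2+y^2$ with primes $p\equiv1\pmod4$, or $\Phi_3$, whichever keeps all indices in $\varphi(n)>2$ — note $\varphi(3)=\varphi(4)=\varphi(6)=2$, so in fact one must use \emph{higher} cyclotomic forms, e.g. $\Phi_5$, $\Phi_8$, $\Phi_{12}$ (with $\varphi=4$), and exploit that primes $p\equiv1\pmod 5$ are represented by $\Phi_5$ with nontrivial $(x,y)$; the orbit size $8$ then comes from $(\pm x,\pm y),(\pm y,\pm x)$, completing the argument.
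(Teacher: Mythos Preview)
Your proposal does not arrive at a proof; it is an exploratory discussion that ends on an approach with a genuine gap. In your final paragraph you take $m$ to be a product of primes $p_j\equiv 1\pmod 5$, each individually represented by $\Phi_5$, and then claim that $m$ ``inherits $\gg\omega(m)$ distinct representations''. But a representation $\Phi_5(x_j,y_j)=p_j$ of a \emph{factor} is not a representation of $m$; to contribute to $b_m$ you need $\Phi_n(x,y)=m$ itself. You would have to show that $\Phi_5(x,y)=m$ has at least $c\,\omega(m)$ solutions, which amounts to a delicate count of elements $x-\zeta_5 y\in\Z[\zeta_5]$ of norm $m$ (a $2$--dimensional sublattice inside a rank--$4$ ring), and you have not supplied this. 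The earlier attempts in your write-up also fail for the reasons you yourself note: $(x,y)=(\pm1,\mp1)$ violates $\max\{|x|,|y|\}\ge 2$; the identity $x^n-1=\prod_{d\mid n}\phi_d(x)$ produces representations of \emph{different} values; and $\Phi_3,\Phi_4,\Phi_6$ are excluded since $\varphi(n)=2$.

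The paper's argument is completely different and much more elementary. The key observation you are missing is that for $n\ge 3$ one has $\Phi_n(x,0)=\Phi_n(0,x)=x^{\varphi(n)}$ (the extreme coefficients of $\phi_n$ are $1$). Hence a pure power $m=2^{K}$ is represented by $\Phi_\ell$ and by $\Phi_{2\ell}$ for \emph{every} odd prime $\ell$ with $(\ell-1)\mid K$: take $(x,y)=(\pm 2^{K/(\ell-1)},0)$ or $(0,\pm 2^{K/(\ell-1)})$, giving $8$ solutions per prime $\ell$. Choosing $K=k_s:=\varphi(3\cdot5\cdots p_s)$ forces $(\ell-1)\mid K$ for all odd primes $\ell\le p_s$; discarding $\ell=3$ (since $\varphi(3)=2$) leaves $b_{m_s}\ge 8(s-1)$ with $m_s=2^{k_s}$. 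Finally the Prime Number Theorem gives $\log k_s\sim p_s\sim s\log s$, hence $s\sim \log\log m_s/\log\log\log m_s$, which is exactly the claimed lower bound. The whole proof fits in a few lines once one sees $\Phi_n(x,0)=x^{\varphi(n)}$; I suggest you rebuild the argument around that identity.
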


\begin{proof}[\indent \sc Proof.]
For the $s$-$th$ odd prime $p_s$, let us consider the integer 
$$k_s=\varphi( 3\cdot 5\cdots p_s),
$$
the product being taken over all the primes between 3 and $p_s$. 
Set $m_s=2^{k_s}$. Then $\Phi_n(x,y)=m_s$ for at least $8s$ values of $(n,x,y)$, namely
$$
(\ell,0,\pm 2^t),\quad (\ell,\pm 2^t,0),\quad (2\ell,0,\pm 2^t),\quad (2\ell,\pm 2^t,0),
$$
for each prime $\ell$ between $3$ and $ p_s$ with $t=k_s/\varphi(\ell)$. Therefore, by
excluding $\ell =3$ we have 
$b_{m_s}\geq 8(s-1)$.

Because
$$
\log k_s =\sum_{3\leq p\leq p_s} \log (p-1),
$$
the Prime Number Theorem implies that for $s\rightarrow \infty$ we have
$$
\log k_s \sim p_s \sim s\log s, 
$$
hence
$$ 
s\sim \frac{\log k_s}{\log \log k_s }\quad \mbox{with} \quad k_s=\frac{\log m_s}{\log 2}
 $$
and 
$$
s\sim \frac{\log \log m_s}{\log \log \log m_s} \cdot
$$ 
This completes the proof of Lemma \ref{Lemme:bmubnbounded}.
 \end{proof}

 \section{{\large Positive definite binary forms}}

Consider a Thue equation $F(X,Y)=m$ associated with the polynomial $f(X)$ defined by 
$f(X)=F(X,1)$, where the polynomial $f(X)$ has no real roots and has positive values on $\R$. It happens that this is the case for the cyclotomic polynomials. Such a situation was also considered in \cite{Gyory}. The following result shows that the study of the associated Diophantine equation $F(X,Y)=m$ reduces to finding a lower bound for the values of $f(t)$ on $\R$. \smallskip

 \begin{lem}\label{Lemma:PositiveDefiniteBinaryForms}
Let $f(X)\in\Z[X]$ be a nonzero polynomial of degree $d$ which has no real root. Let $g(X)=X^df(1/X)$.
 Assume that the leading coefficient of $f(X)$ is positive, so that the real numbers, defined by 
$$ \left\{
\begin{array}{llll}\displaystyle
\gamma_1=\inf_{t\in\R} \,f(t), \quad&
\gamma_2=\displaystyle\inf_{t\in\R}\, g(t),\quad\\[3mm]
\gamma'_1=\displaystyle\inf_{-1\le t\le 1}\, f(t),\quad&
\gamma'_2=\displaystyle \inf_{-1\le t\le 1} \,g(t),\quad& 
\gamma'= \min\bigl \{\gamma'_1,\gamma'_2\bigr\} ,
\end{array} \right.
$$
are $>0$. Let $F(X,Y)$ be the binary form $Y^df(X/Y)$ associated with $f(X)$. 
\\
\mbox{$(1)$} Then for each $(x,y)\in\Z^2$, we have 
\begin{equation*} 
F(x,y) \ge \gamma_1 |y|^d,
\quad
F(x,y) \ge \gamma_2 |x|^d ,\quad 
F(x,y) \ge \gamma'\max\bigl\{ |x|^d,|y|^d\bigr\}.
\end{equation*}
$(2) $ Moreover, the following statements hold true:

\mbox{\rm (i)} For any real number $c_1$ with $c_1>\gamma_1$, 
there exist an infinite set of couples $(x,y)$ in $\Z\times \Z$ satisfying $y>0$ and 
$$
F(x,y)<c_1 y^d.
$$

 \mbox{\rm (ii)} Further, for any real number $c_2$ with $c_2>\gamma_2$, there exist an infinite set of couples $(x,y)$ in $\Z\times \Z$ satisfying $x>0$ and 
$$
F(x,y)<c_2 x^d.
$$

\mbox{\rm (iii)} Furthermore, for any real number $c$ with $c> \gamma'$, there exist an infinite set of couples $(x,y)$ in $\Z\times \Z$ satisfying 
$$
F(x,y)<c \max\bigl\{|x|^d,|y|^d\bigr\}.
$$
\end{lem}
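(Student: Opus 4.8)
The plan is to reduce everything to the two homogeneity identities coming from the definitions $F(X,Y)=Y^df(X/Y)$ and $g(X)=X^df(1/X)$, namely
$$
F(x,y)=y^df(x/y)\ \ (y\neq0),\qquad F(x,y)=x^dg(y/x)\ \ (x\neq0),
$$
the second because $g(Y/X)=(Y/X)^df(X/Y)$, so that $X^dg(Y/X)=Y^df(X/Y)=F(X,Y)$. First I would note that $d$ is even (a real polynomial with no real root has even degree), hence $y^d=|y|^d$ and $x^d=|x|^d$, and moreover $f>0$ on $\R$ and $g>0$ on $\R$ (so the four infima are attained and positive, as asserted). Part $(1)$ is then immediate: for $y\neq0$, $F(x,y)=|y|^df(x/y)\ge\gamma_1|y|^d$, and for $x\neq0$, $F(x,y)=|x|^dg(y/x)\ge\gamma_2|x|^d$, while the lines $x=0$ and $y=0$ are trivial since the right-hand side then vanishes and $F\ge0$. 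For the third inequality, if $|x|\ge|y|$ with $x\neq0$ then $|y/x|\le1$, so $F(x,y)=|x|^dg(y/x)\ge\gamma_2'|x|^d=\gamma_2'\max\{|x|^d,|y|^d\}\ge\gamma'\max\{|x|^d,|y|^d\}$; symmetrically if $|y|\ge|x|$ one uses $\gamma_1'$, and the case $x=y=0$ is trivial.

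For part $(2)$ the mechanism is density of $\Q$ in $\R$ together with continuity of $f$ and $g$. Since $f$ has even degree and positive leading coefficient, its infimum $\gamma_1$ is attained at some $t_0\in\R$. Given $c_1>\gamma_1$, continuity yields $\delta>0$ with $f(t)<c_1$ whenever $|t-t_0|<\delta$; letting $y$ run over all sufficiently large positive integers and $x$ be the nearest integer to $t_0y$, one obtains infinitely many distinct couples $(x,y)$ with $|x/y-t_0|<\delta$, hence $F(x,y)=y^df(x/y)<c_1y^d$. This proves (i), and (ii) follows in the same way, with the roles of $x$ and $y$ exchanged, from the identity $F(x,y)=x^dg(y/x)$ applied to $g$.

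For (iii), write $\gamma'=\min\{\gamma_1',\gamma_2'\}$ and assume without loss of generality that $\gamma'=\gamma_1'$, the other case being symmetric via $g$. The infimum of $f$ over the compact interval $[-1,1]$ is attained at some $t_0\in[-1,1]$, and for $c>\gamma'$ the set $\{t\in[-1,1]:f(t)<c\}$ is a relatively open neighbourhood of $t_0$; it therefore contains infinitely many rationals $x/y$ with $y>0$ arbitrarily large and $|x/y|\le1$ --- approaching $t_0$ from inside $[-1,1]$ when $t_0=\pm1$. For such couples $|x|\le|y|$, so $\max\{|x|^d,|y|^d\}=|y|^d$ and $F(x,y)=y^df(x/y)<cy^d=c\max\{|x|^d,|y|^d\}$, which yields the required infinite family.

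I expect no serious obstacle here: the argument is entirely elementary. The only points deserving a moment's care are verifying that $d$ is even (so that the sign bookkeeping in the homogeneity identities is correct), disposing of the degenerate lines $x=0$ and $y=0$ in part $(1)$, and, in (iii), the endpoint case $t_0=\pm1$, where the approximating rationals must be chosen inside $(-1,1)$ to keep $|x/y|\le1$. The real content is just the two homogeneity identities together with the density of the rationals in $\R$.
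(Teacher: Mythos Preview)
Your proof is correct and follows the same strategy as the paper's own argument: exploit the two homogeneity identities $F(x,y)=|y|^d f(x/y)$ and $F(x,y)=|x|^d g(y/x)$ for the lower bounds, and approximate the minimiser $t_0$ by rationals for the sharpness statements. The differences are only cosmetic: you invoke continuity of $f$ directly to find a neighbourhood on which $f<c_1$, whereas the paper uses a local linear bound $|f(t)-\gamma_1|\le(|f'(t_0)|+1)|t-t_0|$ to obtain $|F(x,y)-y^d\gamma_1|\le(|f'(t_0)|+1)y^{d-1}$ and then lets $y\to\infty$; both lead to the same conclusion. You are also slightly more explicit than the paper about the parity of $d$, the degenerate lines $x=0$, $y=0$, and the endpoint case $t_0=\pm1$ in (iii).
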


Before proceeding with the proof, some remarks are in order.
For $|t|>1$, from $g(t)=t^df(1/t)$ we deduce $f(1/t)\le g(t)$. Hence 
$$
\inf_{-1\le t\le 1}|f(t)|\le \inf_{|t|\ge 1}|g(t)|.
$$
Therefore, if we set 
$$ 
\gamma''_1=\inf_{|t|\ge 1} f(t), \quad \gamma''_2=\inf_{|t|\ge 1} g(t),
$$ 
then we have
$$
\gamma_1=\min\{\gamma'_1,\gamma''_1\},
\quad \gamma_2=\min\{\gamma'_2,\gamma''_2\},
\quad \gamma'_2\le \gamma''_1, \quad \gamma'_1\le \gamma''_2.
$$
Hence 
$$
\gamma'=
\min\{\gamma'_1,\gamma'_2\}\le\min\,\{\gamma''_1,\gamma''_2\}\le \max\,\{\gamma_1,\gamma_2\}.
$$
It follows that for a reciprocal polynomial $f$ we have $\gamma_1=\gamma_2=\gamma'_1=\gamma'_2=\gamma'$; in particular, for a reciprocal polynomial, we have 
\begin{equation}\label{equation:reciproque}
\inf_{t\in\R} f(t) = \inf_{|t|\le 1} f(t).
\end{equation}

 \begin{proof}[\indent \sc Proof of Lemma \ref{Lemma:PositiveDefiniteBinaryForms} .]
(1) The proof of the first two lower bounds of the first part is direct. Let us prove the third one. It is plain that
$$
F(x,y) \ge \gamma'_1 |y|^d \quad \hbox{for}\quad 
 |x|\le |y|\quad \hbox{and}\quad 
 F(x,y) \ge \gamma'_2 |x|^d 
 \quad \hbox{for}\quad |x|\ge |y|.
 $$
The third lower bound follows. 

(2) In the second part of the lemma, we claim that the lower bounds of part (1) are optimal. 

 (i) Suppose that $t_0\in\R$ is a value such that $f(t_0)=\gamma_1$.
There exists a real number $a>0$ such that, for $t$ in the open interval $]t_0-a,t_0+a[$, we have 
$$
|f(t)-\gamma|\le (|f'(t_0)|+1)(t-t_0).
$$
For $y>0$, let $x$ in $\Z$ such that 
$$
\left|t_0-\frac{x}{y}\right|\le 1.
$$
For $y$ sufficiently large, $x/y$ is in the interval $]t_0-a,t_0+a[$ and we have 
$$
|F(x,y)-y^df(t_0)|\le (|f'(t_0)|+1) y^{d-1}.
$$
As a consequence, for $y$ sufficiently large, we have
$$ 
\centerline{$ \displaystyle\displaystyle F(x,y)<c_1 y^d. $ }
 $$ 
 
(ii) The next result is proved in the same way. 
 
 (iii) 
Let us prove now the last statement. Assume first
 $c> \gamma'_1$. Let us uppose $-1\le t_0\le 1$. Our argument above gives infinitely many couples $(x,y)$ in $\Z\times\Z$ with 
 $F(x,y)<c |y|^d $ and $|y|\le |x|$. Hence 
$$F(x,y)<c \max\bigl\{|x|^d,|y|^d \bigr \}.$$
 The same argument, starting with $|t_0|\ge 1$, gives infinitely many couples $(x,y)$ with 
 $F(x,y)<c |x|^d $ and $|x|\le |y|$. The case $c> \gamma'_2$ is proved in the same way. Hence the result. 
 \end{proof} 

 Let us mention in passing that Gy\H{o}ry (page 364 of \cite{Gyory}) exhibited Thue equations which 
 have as many (nonzero) solutions as one pleases, by allowing the degree to be large enough. Let us complement with a similar example. 
Let $c_j$ ($j=1,2,\dots,\ell$) be different rational integers and let $c>0$ be also any fixed integer. 
Consider the binary form $F(X,Y)$ of degree $2\ell$ defined by 
$$
F(X,Y)=\prod_{j=1}^\ell (X-c_jY)^2+cY^{2\ell}.
$$
 Here $F(x,y)>0$ for all $(x,y)\in\R^2\setminus\{(0,0)\}$.
Moreover, for $j=1,2,\dots,\ell$, we have $F(c_j,1)=c $,
and the minimum value on the real axis of the associated polynomial $f(X)$, defined by $F(X,1)$, is $c$. 

 \section{On cyclotomic polynomials }
 
The cyclotomic polynomials $\phi_n(X)\in\Z[X]$, $n\ge 1$, are defined by the formula
\begin{equation}\label{Cyclotomic1}
 \phi_n(X)=\prod_{\zeta\in E_n} (X-\zeta) 
\end{equation}
where $E_n$ is the set of primitive roots of unity of order $n$. 
 One can also define them via the recurrence provided by 
\begin{equation}\label{Cyclotomic2}
X^n-1=\prod_{d\mid n} \phi_d(X).
\end{equation}
The degree of $\phi_n(X) $ is $\varphi(n)$, where $\varphi$ is Euler's totient function. 
We will always suppose that $n\ge 3$, whereupon $\varphi(n)$ is always even. 
For $n\ge 3$, the polynomial $\phi_n(X)$ has no real 
root. \bigskip

Two very important formulas for cyclotomic polynomials are the following ones: when 
 $n$ is an integer $\geq 1$ written as $n=p^r m$ with $p$ a prime number dividing $n$ and with $m$ such that 
 $\mbox{\sc gcd}(p,m)=1$, 
 we have
\begin{equation}\label{Cyclotomic3}
\phi_{n}(X)
= \frac{\phi_{m}\bigl (X^{p^r}\bigr ) }
 { \phi_{m} \bigl ( X^{p^{r-1} } \bigr) } \qquad \mbox{and} \qquad 
\phi_{n}(X)={\phi_{pm}\bigl(X^{p^{r-1}}\bigr) }.
\end{equation}

\smallskip

For our purposes, we will use the following properties:

(i) The $n$-$th$ cyclotomic polynomial can be defined by
\begin{equation}\label{Equation:Mobius}
\phi_n(X)=
\prod_{d\mid n} (X^d-1)^{\mu(n/d)},
\end{equation}
where $\mu$ is the M\oe bius function.

(ii) Let $n=2^{e_0}p_1^{e_1}\cdots p_r^{e_r}$ where $p_1,\dots,p_r$ are different odd primes, $e_0\ge 0$, $e_i\ge 1$ for $i=1,\dots,r$ and $r\ge 1$. Denote by $R$ the radical of $n$, namely 
$$
 R=\left\{ \begin{array}{rll}
 2p_1\cdots p_r&\mbox{if } e_0\ge 1,\\[2mm]
 p_1\cdots p_r&\mbox{if } e_0=0. \end{array} \right.
$$ 
Then, 
\begin{equation}\label{CyclotomicRadical}
\phi_n(X) = \phi_{R} (X^{n/R}).
\end{equation}

(iii) Let $n=2m$ with $m$ odd $\ge 3$. Then
\begin{equation}\label{CyclotomicPair}
\phi_n(X) = \phi_{m} (-X).
\end{equation}


\section{The invariants $c_n$}\label{cn}
 The real number $c_n$, which we define by 
$$
c_n=\inf_{t\in \R} \phi_n(t),
$$
is always $>0$ for $n\geq 3$; this invariant $c_n$ will play a major role in this paper. 
Since the cyclotomic polynomials are reciprocal, we deduce from 
($\ref{equation:reciproque}$)
\begin{equation}\label{Equation:interval-1+1}
c_n=\inf_{-1\le t\le 1} \phi_n(t).
\end{equation}
 
\begin{prop}\label{Prop:cn}
Let $n\ge 3$. Write 
$$
n=2^{e_0}p_1^{e_1}\cdots p_r^{e_r}
$$
 where $p_1,\dots,p_r$ are odd primes with $p_1<\cdots<p_r$, $e_0\ge 0$, $e_i\ge 1$ for $i=1,\dots,r$ and $r\ge 0$.
 \\
 \mbox{\rm (i)} For $r=0$, we have $e_0\ge 2$ and $c_n=c_{2^{e_0}}=1$.
\\
\mbox{\rm (ii)} For $r\ge 1$ we have 
$$
c_n=c_{p_1\cdots p_r}
\ge p_1^{-2^{r-2}}.
$$
\end{prop}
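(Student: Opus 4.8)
The plan is to reduce everything to the squarefree case and then handle that case by a careful use of the formula $\phi_n(X)=\prod_{d\mid n}(X^d-1)^{\mu(n/d)}$ restricted to $t\in[-1,1]$. First, I would dispose of part (i): if $r=0$ then $n=2^{e_0}$ with $e_0\ge 2$ (since $n\ge 3$), and by $(\ref{CyclotomicRadical})$ we have $\phi_n(X)=\phi_4(X^{n/4})=X^{2\cdot n/4}+1$, whence $c_n=\inf_{t\in\R}(t^{n/2}+1)=1$ because $n/2$ is even. For part (ii), formula $(\ref{CyclotomicRadical})$ gives $\phi_n(X)=\phi_R(X^{n/R})$ where $R$ is the radical; since $t\mapsto t^{n/R}$ maps $[-1,1]$ onto $[-1,1]$ (and onto $[0,1]$ when $n/R$ is even), combining with $(\ref{Equation:interval-1+1})$ yields $c_n=\inf_{-1\le t\le 1}\phi_R(t)=c_R$. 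Then $(\ref{CyclotomicPair})$ lets me pass from $R=2p_1\cdots p_r$ to $p_1\cdots p_r$ since $\phi_{2m}(X)=\phi_m(-X)$ and $[-1,1]$ is symmetric. So it remains to prove $c_N\ge p_1^{-2^{r-2}}$ for $N=p_1\cdots p_r$ squarefree odd with $r\ge 1$ primes.

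For the squarefree odd case I would write, using $(\ref{Equation:Mobius})$, that for $t\in[-1,1]$,
$$
\phi_N(t)=\prod_{d\mid N}(t^d-1)^{\mu(N/d)}
=\pm\prod_{d\mid N}|1-t^d|^{\mu(N/d)},
$$
and estimate the numerator and denominator separately: the denominator is a product of $|1-t^d|$ over those $d$ with $\mu(N/d)=-1$, and on $[-1,1]$ each factor satisfies $|1-t^d|\le 2$. The number of divisors $d\mid N$ with $\mu(N/d)=-1$ is $2^{r-1}$ (half of the $2^r$ divisors). Meanwhile the numerator factors $|1-t^d|$ with $\mu(N/d)=+1$ include $d=N$ itself and can be bounded below; the delicate point is that $|1-t^d|$ can be small when $t$ is near a root of unity, and one must show the small factors in the denominator are compensated. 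The cleanest route is induction on $r$: for $r=1$, $\phi_p(t)=(t^p-1)/(t-1)=1+t+\cdots+t^{p-1}$, and one checks directly that $\inf_{-1\le t\le 1}\phi_p(t)\ge$ a value $\ge p^{-2^{-1}}=p^{-1/2}$ (in fact $c_p=\phi_p$ evaluated where the minimum occurs, and $c_3=3/4\ge 3^{-1/2}$, $c_p\ge 1$ for the minimum... I would verify the sharp constant). For the inductive step, using the first identity in $(\ref{Cyclotomic3})$ or rather the multiplicative structure, I would relate $\phi_N(t)$ for $N=p_1\cdots p_r$ to $\phi_{N/p_r}$ evaluated at $t$ and $t^{p_r}$ via $\phi_N(X)=\phi_{N/p_r}(X^{p_r})/\phi_{N/p_r}(X)$, giving
$$
c_N\ \ge\ \frac{\inf_{-1\le t\le 1}\phi_{N/p_r}(t^{p_r})}{\sup_{-1\le t\le 1}\phi_{N/p_r}(t)}
\ \ge\ \frac{c_{N/p_r}}{\sup_{-1\le t\le 1}\phi_{N/p_r}(t)},
$$
and the point is to bound $\sup_{[-1,1]}\phi_{m}$ for $m=N/p_r$ squarefree odd. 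Since $\phi_m(t)=\prod|1-t^d|^{\mu(m/d)}\le \prod_{\mu(m/d)=1}2 \le 2^{2^{r-2}}$, while inductively $c_{N/p_r}\ge p_1^{-2^{r-3}}$, one must check $p_1^{-2^{r-3}}/2^{2^{r-2}}\ge p_1^{-2^{r-2}}$, i.e. $p_1^{2^{r-2}-2^{r-3}}\ge 2^{2^{r-2}}$, i.e. $p_1^{2^{r-3}}\ge 2^{2^{r-2}}=4^{2^{r-3}}$, which fails for $p_1=3$. So this crude bound is not quite enough and the exponent bookkeeping must be sharpened.

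The main obstacle, therefore, is getting the exponent $2^{r-2}$ exactly right rather than something like $2^{r-1}$: one needs a more careful pairing of the factors $|1-t^d|$ in numerator and denominator of $\phi_N(t)=\prod_d|1-t^d|^{\mu(N/d)}$, grouping divisors $d$ and $2d$ (or $d$ and $d\cdot p_r$) so that cancellation of the dangerous small factors is made explicit, and only $2^{r-2}$ genuinely ``uncompensated'' factors of size $\le p_1^{-1}$ remain near the worst point $t$. Concretely I expect the right approach is to locate where the infimum of $\phi_N$ on $[-1,1]$ is attained — by the reciprocity and symmetry it should be governed by $t$ near a primitive $p_1$-th root of unity situation after the substitutions, reducing to understanding $\phi_{p_1}$ near its minimum — and to run the induction keeping track not of $\sup\phi_m$ but of the precise local behaviour. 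I would therefore structure the proof as: (a) reductions via $(\ref{CyclotomicRadical})$ and $(\ref{CyclotomicPair})$ to $N$ squarefree odd; (b) base case $r=1$ by explicit computation of $c_p$; (c) the inductive step via $\phi_N(X)=\phi_{N/p_r}(X^{p_r})/\phi_{N/p_r}(X)$ together with a sharp upper bound for $\phi_{m}$ on $[-1,1]$ proved by the same induction with a matching constant, so that the two constants are tuned to close the induction at exponent $2^{r-2}$.
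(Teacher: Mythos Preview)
Your reductions in part (i) and from general $n$ to the odd squarefree kernel $N=p_1\cdots p_r$ via $(\ref{CyclotomicRadical})$ and $(\ref{CyclotomicPair})$ match the paper. The gap is in the squarefree odd case.

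Your inductive scheme via $\phi_N(X)=\phi_{N/p_r}(X^{p_r})/\phi_{N/p_r}(X)$, even in the ``matched--constant'' form you propose at the end, cannot close: to make the induction step work at exponent $2^{r-2}$ you would need the companion bound $\sup_{[-1,1]}\phi_m\le p_1^{2^{s-2}}$ for $m=p_1\cdots p_s$, and already for $s=1$ this asks for $\sup_{[-1,1]}\phi_{p_1}\le p_1^{1/2}$, which is false since $\phi_{p_1}(1)=p_1$. Peeling off $p_r$ is the wrong move.

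The paper avoids induction and peels off the \emph{smallest} prime $p_1$. Pairing each divisor $d\mid p_2\cdots p_r$ with $p_1 d$, the M\"obius product $(\ref{Equation:Mobius})$ collapses to
$$
\phi_N(t)=\prod_{d\mid p_2\cdots p_r}\phi_{p_1}(t^{\,d})^{\mu(p_2\cdots p_r/d)},
$$
a product of $2^{r-1}$ factors, $2^{r-2}$ in the numerator and $2^{r-2}$ in the denominator. Then one splits the interval. Since every $d$ is odd, for $t\in[-1,0]$ all $t^d\in[-1,0]$ and the elementary bounds $\tfrac12\le\phi_{p_1}(s)\le 1$ there give $\phi_N(t)\ge 2^{-2^{r-2}}$; for $t\in[0,1]$ all $t^d\in[0,1]$ and the bounds $1\le\phi_{p_1}(s)\le p_1$ there give $\phi_N(t)\ge p_1^{-2^{r-2}}$. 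The point is that on each half--interval the ratio $\sup\phi_{p_1}/\inf\phi_{p_1}$ is at most $p_1$ (respectively $2$), whereas on all of $[-1,1]$ it is $2p_1$; that extra factor of $2$ per step is exactly what your $p_r$--induction loses and why your check ``$p_1^{2^{r-3}}\ge 4^{2^{r-3}}$'' fails for $p_1=3$.
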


Here are the first values of $c_n$ for $n$ odd and squarefree, with for each $n$ a value of $t_n \in ]\!-1,1[$ such that $c_n=\phi_n(t_n)$:
 \bigskip\bigskip

\begin{center}
\noindent $
\begin{array}{|c||c|c|} \hline 
n& c_n&\; t_n\\[2mm]
\hline\hline
3&\! \! \! 0.75&\! \!\! -0.5\\
5&0.673...&-0.605...\\
7&0.635...&-0.670...\\
11&0.595...&-0.747...\\
13&0.583...&-0.772... \\
15&0.544...&-0.792...\\ 
17&0.567...&-0.808...\\ 
\hline 
\end{array}$\quad\;
$\begin{array}{|c||c|c|}\hline
n&c_n&\; t_n\\[2mm]
\hline\hline
19&0.562...&-0.822...\\ 
21&0.496...&-0.834...\\
23&0.553...&-0.844...\\
29&0.544...&-0.867...\\
31&0.541...&-0.873...\\ 
33&0.447...&-0.879...\\ 
35&0.375...&-0.884...\\ 
\hline
\end{array}
$\quad\;
$
\begin{array}{|c||c|c|}\hline
n&c_n&t_n\\[2mm]
\hline\hline
37&0.536...&-0.889... \\
39&0.786...&-0.954...\\
41&0.533...& -0.897...\\
43&0.531...&-0.900...\\
47&0.529...&-0.907...\\
51&0.778...&-0.964...\\
53&0.526...&-0.915...\\
\hline 
\end{array}$
\end{center}

\bigskip

\centerline{{\bf Tables 3}}

\bigskip 
 
{\sc Proof of Proposition $\ref{Prop:cn}$.} 
 In view of the properties 
 ($\ref{CyclotomicRadical}$) and ($\ref{CyclotomicPair}$), 
we may restrict to the case where $n$ is odd and squarefree. 

We plan to prove
\begin{equation}\label{Equation:Proposition4.2}
 \phi_{p_1p_2\cdots p_r}(t)\ge \frac{1}{p_1^{2^{r-2}}}
\end{equation}
for $r\ge 1$ and $-1\le t\le 1$. 

We start with the case $r=1$. Let $p$ be an odd prime.
For $-1\le t\le 0$, we have 
$1\le 1-t^p\le 1-t\le 2$, hence 
\begin{equation}\label{Equation:tp-10}
\frac{1}{2}\le \phi_{p}(t)\le 1.
\end{equation}
For $0\le t\le 1$, we have 
$0\le 1-t\le 1-t^p\le 1$ and $\phi_p(t)=1+t+t^2+\cdots+t^{p-1}$,
whereupon 
\begin{equation}\label{Equation:tp0-1}
1\le \phi_{p}(t)\le p.
\end{equation}
We deduce $1/2\le \phi_p(t)\le p$ for $-1\le t\le 1$. Since $c_3=3/4$, this completes the proof of ($\ref{Equation:Proposition4.2}$) for $r=1$. 

Assume now $r\ge 2$. Using ($\ref{Equation:Mobius}$) for $n=p_1\cdots p_r$, we express $\phi_n(t)$ as a product of $2^{r-1}$ factors, half of which are of the form $\phi_{p_1}(t^d)$ while the other half are of the form $1/\phi_{p_1}(t^d)$, where $d$ is a divisor of $p_2p_3\cdots p_r$. 

For $t$ the interval $[-1,0]$, using ($\ref{Equation:tp-10}$), we have 
$$
\frac12\le \phi_{p_1}(t)\le 1 
\quad\mbox{and}\quad 
\frac12 \le \phi_{p_1}(t^d)\le 1,
$$
hence 
\begin{equation}\label{Equation:Phip1p2pr1}
\frac{1}{2^{2^{r-2}}}\le \phi_{p_1p_2\cdots p_r}(t)\le 2^{2^{r-2}} .
\end{equation}
For $t$ the interval $[0,1]$, using ($\ref{Equation:tp0-1}$), we have 
$$
1\le \phi_{p_1}(t)\le p_1 
\quad\mbox{and}\quad 
1\le \phi_{p_1}(t^d)\le p_1,
$$
whereupon 
\begin{equation}\label{Equation:Phip1p2pr2}
\frac{1}{p_1^{2^{r-2}}}\le \phi_{p_1p_2\cdots p_r}(t)\le p_1^{2^{r-2}} .
\end{equation}
From ($\ref{Equation:Phip1p2pr1}$) and ($\ref{Equation:Phip1p2pr2}$), we conclude that 
($\ref{Equation:Proposition4.2}$) is true. Thanks to ($\ref{Equation:interval-1+1}$), ($\ref{Equation:Proposition4.2}$) can be written
$$\centerline{$ \displaystyle
\log c_n\ge -2^{r-2}\log p_1.$}\hfill \Box 
$$
 
 We need an auxiliary result.

\begin{lem} \label{Lemma:MinorationPhin}
For any odd squarefree integer $n=p_1\cdots p_r$ with $p_1<p_2<\cdots<p_r$ satisfying $n\geq 11$ and $n\neq 15$, we have
\begin{equation}\label{Equation:MinorationPhin}
\varphi(n) >2^{r+1} \,\log\, p_1.
\end{equation}
\end{lem}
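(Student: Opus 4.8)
The plan is to prove the inequality $\varphi(n) > 2^{r+1}\log p_1$ for odd squarefree $n = p_1 \cdots p_r$ with $n \ge 11$, $n \ne 15$, by first reducing to a lower bound for $\varphi(n)$ in terms of the $p_i$ and then using crude numerical estimates. Since $n$ is squarefree, $\varphi(n) = (p_1-1)(p_2-1)\cdots(p_r-1)$. So what must be shown is
$$
(p_1-1)(p_2-1)\cdots(p_r-1) > 2^{r+1}\log p_1.
$$
First I would handle the case $r=1$ separately: we need $p_1 - 1 > 4\log p_1$, which fails for the small primes but, since $n = p_1 \ge 11$ is forced here, we only need it for $p_1 \ge 11$, where $p_1 - 1 \ge 10 > 4\log 11 \approx 9.59$, and the function $p - 1 - 4\log p$ is increasing for $p \ge 11$. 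For $r=2$ we need $(p_1-1)(p_2-1) > 8\log p_1$ with $\{p_1,p_2\}$ odd primes, $p_1 < p_2$, and $n = p_1 p_2 \notin \{15\}$ (and automatically $n \ge 11$); the smallest such $n$ is $21 = 3\cdot 7$ (as $15$ is excluded), giving $(3-1)(7-1) = 12 > 8\log 3 \approx 8.79$, and larger cases are easier since increasing any $p_i$ increases the left side faster than the right.

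For the main case $r \ge 3$, the strategy is to compare factor by factor. It suffices to show $\prod_{i=1}^r (p_i - 1) > 2^{r+1}\log p_1$. I would split off the dependence on $p_1$: write the target as $\prod_{i=2}^r(p_i-1) > 2^{r+1}\cdot \frac{\log p_1}{p_1 - 1}$. Since $p_1$ is an odd prime, $\frac{\log p_1}{p_1-1} \le \frac{\log 3}{2} < 0.55$, so it is enough to prove $\prod_{i=2}^r (p_i - 1) \ge 2^{r+1}$, i.e. $\prod_{i=2}^r \frac{p_i - 1}{2} \ge 4$. Since $p_2 < p_3 < \cdots < p_r$ are distinct odd primes all exceeding $p_1 \ge 3$, we have $p_i \ge$ the $i$-th odd prime counting from $p_2$; in the worst case $(p_2,p_3,\dots) = (5,7,11,\dots)$ for $r \ge 3$ when $p_1 = 3$, giving $\frac{p_2-1}{2}\cdot\frac{p_3-1}{2} = 2 \cdot 3 = 6 \ge 4$ already from the first two factors, and each further factor only multiplies by something $\ge 5$. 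If instead $p_1 \ge 5$ then $p_i \ge 7, 11, 13, \dots$ and the product is even larger. So a short induction (or direct estimate of the two smallest factors) closes $r \ge 3$.

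The main obstacle is the genuinely tight small cases $r = 1$ with $p_1 = 11$ (ratio $10$ vs $\approx 9.59$) and $r = 2$ with $n = 21$ (ratio $12$ vs $\approx 8.79$) and $n = 33 = 3\cdot 11$ (ratio $(2)(10)=20$ vs $8\log 3 \approx 8.79$, comfortable); these force the precise hypotheses $n \ge 11$ and $n \ne 15$ (note $n = 15$ gives $(2)(4) = 8$ which is not $> 8\log 3 \approx 8.79$, so $15$ must indeed be excluded, and $n = 3, 5, 7, 9$ are excluded by $n \ge 11$ since $9$ is not squarefree anyway). I would therefore present the proof as: reduce to $n$ squarefree odd (given), note $\varphi(n) = \prod(p_i - 1)$, dispatch $r = 1$ and $r = 2$ by finite checks against the excluded list, and for $r \ge 3$ use the bound $\frac{\log p_1}{p_1 - 1} \le \frac{\log 3}{2}$ together with $\prod_{i=2}^r(p_i - 1) \ge 2^{r+1}$, which follows because the two smallest factors $p_2 - 1, p_3 - 1$ already contribute at least $4 \cdot 6 = 24 \ge 2 \cdot 2^2$ (after dividing by $2^{r}$ appropriately) — more precisely each of the $r - 1$ factors $p_i - 1$ is at least $4$, with the two smallest at least $4$ and $6$, yielding $\prod_{i=2}^r (p_i-1) \ge 6 \cdot 4^{r-2} \ge 2^{r+1}$ for $r \ge 3$.
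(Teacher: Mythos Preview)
Your proof is correct and follows essentially the same strategy as the paper: treat $r=1$ and $r=2$ separately, and for $r\ge 3$ bound $\prod_{i\ge 2}(p_i-1)$ below by a power of two large enough to absorb $2^{r+1}$, then finish with $p_1-1>\log p_1$. The only cosmetic difference is that for $r=2$ the paper observes uniformly that $n\ne 15$ forces $p_2\ge 7$, giving $(p_1-1)(p_2-1)\ge 6(p_1-1)>8\log p_1$ in one stroke, whereas you check $n=21$ and then wave at monotonicity; and for $r\ge 3$ the paper uses the slightly cruder $\prod_{i\ge 2}(p_i-1)\ge 4^{r-1}=2^{2(r-1)}\ge 2^{r+1}$ rather than your $6\cdot 4^{r-2}$.
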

\begin{proof}[\indent \sc Proof.]
If $r=1$, the number $n$ is a prime $\ge 11$ and ($\ref{Equation:MinorationPhin}$) is true with $p_1=n$. 
 If
$r=2$, $n\neq 15$, we have $p_2\geq 7$, hence
$$
\varphi(p_1p_2)=(p_1 -1)(p_2 -1) > 6(p_1 -1) >8\log \, p_1,
$$ 
whereupon ($\ref{Equation:MinorationPhin}$) is true. 

Assume $r\ge 3$. We have 
$$
\varphi(n)=(p_1-1)(p_2-1)\cdots(p_r-1)> (p_1-1) 2^{2(r-1)}\ge(p_1-1)2^{r+1} >2^{r+1}\log p_1.
$$
This completes the proof of Lemma $\ref{Lemma:MinorationPhin}$. 
\end{proof}

We deduce the following consequence.

\begin{prop}\label{Proposition:propcn}
For $n\ge 3$, we have
$$
c_n\ge (\sqrt{3}/2)^{\varphi(n)}.
$$
\end{prop}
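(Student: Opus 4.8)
\emph{Proof proposal.} The plan is to reduce to the case where $n$ is odd and squarefree, and then to split these $n$ into a generic range — handled by combining Proposition~\ref{Prop:cn} with Lemma~\ref{Lemma:MinorationPhin} — and a short finite list of small exceptions, checked by hand using \eqref{Equation:interval-1+1}. For the reduction: if $n=2^{e_0}$ with $e_0\ge 2$, then Proposition~\ref{Prop:cn}(i) gives $c_n=1\ge(\sqrt3/2)^{\varphi(n)}$ because $\sqrt3/2<1$; otherwise write $n=2^{e_0}p_1^{e_1}\cdots p_r^{e_r}$ with $r\ge 1$, so that Proposition~\ref{Prop:cn}(ii) gives $c_n=c_{p_1\cdots p_r}$, and since $p_1\cdots p_r\mid n$ we have $\varphi(p_1\cdots p_r)\le\varphi(n)$, hence $(\sqrt3/2)^{\varphi(p_1\cdots p_r)}\ge(\sqrt3/2)^{\varphi(n)}$. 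It therefore suffices to establish the inequality for $N=p_1\cdots p_r$, an odd squarefree integer $\ge 3$.

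In the generic range, namely $N$ odd squarefree with $N\ge 11$ and $N\ne 15$, I would simply combine the two available estimates: Proposition~\ref{Prop:cn}(ii) yields $\log(1/c_N)\le 2^{r-2}\log p_1$, while Lemma~\ref{Lemma:MinorationPhin} yields $\log p_1<\varphi(N)/2^{r+1}$. Multiplying these gives $\log(1/c_N)<\varphi(N)/8$. Since $1/8=0.125<\log 2-\tfrac12\log 3=\log(2/\sqrt3)=0.1438\ldots$, we obtain $\log(1/c_N)<\varphi(N)\log(2/\sqrt3)$, which is exactly $c_N>(\sqrt3/2)^{\varphi(N)}$.

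The only odd squarefree integers $\ge 3$ not covered above are $N\in\{3,5,7,15\}$, which I would dispose of directly. For $N=3$ one has $c_3=3/4=(\sqrt3/2)^2$, so equality holds and the bound is sharp. For $N=7$, inequality \eqref{Equation:tp-10} gives $\phi_7(t)\ge 1/2$ on $[-1,0]$, and \eqref{Equation:tp0-1} gives $\phi_7(t)\ge 1$ on $[0,1]$, so by \eqref{Equation:interval-1+1} $c_7\ge 1/2>27/64=(\sqrt3/2)^6$. For $N=15$, Proposition~\ref{Prop:cn}(ii) gives $c_{15}\ge 3^{-1}=1/3>81/256=(\sqrt3/2)^8$. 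For $N=5$ the crude bounds fall just short of $9/16=(\sqrt3/2)^4$, so I would complete the square: for $s\in[0,1]$,
$$
\phi_5(-s)=s^4-s^3+s^2-s+1=\Bigl(s^2-\tfrac s2\Bigr)^2+\tfrac34\Bigl(s-\tfrac23\Bigr)^2+\tfrac23\ge\tfrac23,
$$
while $\phi_5(t)\ge 1$ on $[0,1]$, so $c_5\ge 2/3>9/16$. (Alternatively, one may simply quote the values $c_5=0.673\ldots$, $c_7=0.635\ldots$, $c_{15}=0.544\ldots$ from the tables in \S\ref{cn}.)

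The only delicate point is that the combination of Proposition~\ref{Prop:cn} and Lemma~\ref{Lemma:MinorationPhin} wins by the slim margin $1/8$ against $\log(2/\sqrt3)\approx 0.144$, and therefore does not reach the handful of small squarefree $n$ for which $\varphi(n)$ is too small relative to $\log p_1$; those exceptions $3,5,7,15$ must be handled individually, and among them $N=5$ is the tightest — the only one requiring slightly more than the generic estimate — which is why a short ad hoc argument (or a glance at the numerical tables) is needed there.
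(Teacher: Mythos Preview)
Your proof is correct and follows essentially the same route as the paper: reduce to odd squarefree $n$, handle the generic case by combining Proposition~\ref{Prop:cn}(ii) with Lemma~\ref{Lemma:MinorationPhin} and the inequality $\log(2/\sqrt3)>1/8$, and treat $n\in\{3,5,7,15\}$ separately. The only difference is cosmetic: the paper dispatches the four exceptions by quoting the numerical values of $c_n$ from the tables, whereas you supply short self-contained estimates (notably the completion of the square for $n=5$), which is a nice touch but not a different method.
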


This lower bound is best possible, since there is equality for $n=3$ (and $n=6$). 

\begin{proof}[\indent {\sc Proof of Proposition $\ref{Proposition:propcn}$}] 
It suffices to check the inequality when $n$ is an odd squarefree integer, say $n=p_1\cdots p_r$ where $p_1<p_2< \cdots < p_r$ with $r\ge 1$. 
This lower bound is true for $n=3$ (with equality, since $c_3=3/4$), and also for $n=5$, for $n=7$ and for $n=15$, since
$$
c_5>0.6>(\sqrt{3}/2)^4,\qquad
c_7>0.6>(\sqrt{3}/2)^6,\qquad
c_{15}>0.5>(\sqrt{3}/2)^8.
$$ 
Using Proposition $\ref{Prop:cn}$(ii) and Lemma $\ref{Lemma:MinorationPhin}$, we have
$$
8\log c_n \ge -2^{r+1} \log\, p_1 \geq -\varphi(n),
$$
whereupon
$$
c_n \geq e^{- \varphi(n)/8}\geq \left(\displaystyle \frac{\sqrt{3}}{2}\right)^{\varphi(n)}
$$
since $ \log (2/\sqrt{3})>1/8$.
\end{proof}

Proposition $\ref{Proposition:propcn}$ will be sufficient for the proofs of Theorem $\ref{Thm:1}$, Theorem $\ref{Thm:1suite}$ and Lemma \ref{Lemma:varphi>2}. However, it may be of independent interest to state further properties of $c_n$, which are easy to prove.

\bigskip
 
For $p$ an odd prime number, the derivative $\phi'_p(t)$ of the cyclotomic polynomial $\phi_p(t)$ has a unique real root, this root lives in the interval $]\!-1, -\frac12]$ and will be denoted $t_p$.

\noindent
 $\bullet$ For $p=3$, we have $t_3=-\frac12$.

\noindent
 $\bullet$ For $p$ an odd prime number, one has $\, c_p=p t_p^{p-1}$.
 \\
 $\bullet$ The sequence $(t_p)_ {p\; \mbox{\footnotesize odd prime} } $
is decreasing and converges to $-1$; in fact, we have 
$$
-1+\frac{\log (2p)}{p}-\frac{(\log(2p))^2}{2p^2} <t_p<-1+\frac{\log (2p)}{p}+\frac{\log(2p)}{p^2}\cdotp
$$
\\
 $\bullet$ 
 The sequence $(c_p)_ {p\; \mbox{\footnotesize odd prime} } $
 is decreasing and converges to $1/2$; in fact, we have 
$$
c_p=\frac12+\frac{1+\log (2p)}{4p} +\frac{\nu_p(\log p)^2}{p^2} \quad\hbox{with}\quad 
 |\nu_p|\le \frac{1}{4}\cdotp
$$ 
 $\bullet$ 
 Let $p_1$ and $p_2$ be two primes. We have
$$
c_{p_1p_2}\ge \frac{1}{p_1}\cdotp
$$
Further, for any prime $p_1$, we have
$$
\lim_{p_2\rightarrow \infty}c_{p_1p_2}= \frac{1}{p_1}\cdotp
$$
$\bullet$ We have $\displaystyle \liminf_{n\rightarrow \infty} c_n=0$ and $\displaystyle \limsup_{n\rightarrow \infty} c_n=1$.
 

\section{Proof of Theorems \ref{Thm:1} and \ref{Thm:1suite} }\label{S:ProofMainResults}
 
{\sc $\quad$ Proof of Theorem $\ref{Thm:1}$. }
Assume 
 $$
 \Phi_n(x,y)=m
 $$
with $n\geq 3$ and $\max\{|x],|y|\} \geq 2 $. Using Lemma $\ref{Lemma:PositiveDefiniteBinaryForms}$, 
we deduce 
\begin{equation}\label{Equation:cnmaxxy}
c_n\max\{|x],|y|\}^{\varphi(n)}\le m.
\end{equation} 
From Proposition $\ref{Proposition:propcn}$ we deduce 
\begin{equation}\label{Equation:sqrt3maxxy}
\left(\frac{\sqrt{3}}{2}
\max\{|x|,|y|\}\right)^{\varphi(n)} \le m.
\end{equation} 
Since $\max\{|x],|y|\} \geq 2 $, we deduce the desired upper bound for $\varphi(n)$: 
$$
3^{\varphi(n)/2} \le m.
$$
Using again ($\ref{Equation:sqrt3maxxy}$), we deduce
$$ \centerline{$ \displaystyle
\max\{|x|,|y|\} \le \frac{2}{\sqrt{3}} \,m^{1/\varphi(n)}.$} \hfill \Box 
$$ 
 
 \begin{proof}[\indent {\sc Proof of Theorem $\ref{Thm:1suite}$}]
We first prove that if the triple $(n,x,y)$ satisfies 
$$
n\ge 3, \quad \max\{|x|,|y|\}\ge 2 \quad \mbox{and} \quad \Phi_n(x,y)<7^{\varphi(n)/2},
$$
then $\max\{|x|,|y|\}=2$. Using MAPLE \cite{MAPLE}, we check that this property is verified for $n\in\{3,5,7,15\}$, namely, each of the inequalities 
$$
\Phi_3(x,y)<7,\quad \Phi_5(x,y)<7^2,\quad \Phi_{7}(x,y)<7^3,\quad \Phi_{15}(x,y)<7^4
$$
implies $\max\{|x|,|y|\}=2$. 

For $n$ an odd squarefree integer $\not \in\{3,5,7,15\}$, according to ($\ref{Equation:MinorationPhin}$), we have 
$$
\varphi(n)> 2^{r+1}\log p_1.
$$
Since $\log (3/\sqrt{7})>1/8$, we deduce from ($\ref{Equation:cnmaxxy}$) and Proposition $\ref{Prop:cn}$
that the assumption $\Phi_n(x,y)<7^{\varphi(n)/2}$ implies 
\begin{align}\notag
\varphi(n)\log\max\{|x|,|y|\} &\le \log\Phi_n(x,y)- \log c_n 
\\[3mm]
\notag
&< \frac{\varphi(n)}{2}\log 7 +2^{r-2}\log p_1
\\[3mm]
\notag
&
<
 \left(\frac{1}{2}\log 7 +\frac{1}{8}\right)\varphi(n)<\varphi(n)\log 3,
\end{align}
hence
$\max\{|x|,|y|\}<3$ and therefore $\max\{|x|,|y|\}=2$.
 Since $2\log 2<\log 7$, we deduce that the assumptions $n\ge 3$, $\max\{|x|,|y|\}\ge 2$, and $\Phi_n(x,y)\le 2^{ \varphi(n)}$ imply $\max\{|x|,|y|\}=2$. 

Let $\theta \in ]0,1[$ and let the triple $(n,x,y)$ satisfy $n\ge 3$, $\max\{|x|,|y|\}\ge 2$, and $\Phi_n(x,y)\le 2^{\theta \varphi(n)}$.
Therefore
$$
c_n\le 2^{(\theta-1)\varphi(n)}.
$$
Proposition $\ref{Prop:cn}$ implies
$$
(1-\theta) (\log \, 2) \varphi(n) \le 2^{r-2}\log\, p_1.
$$
It remains to check that the odd squarefree integers $n$ satisfying this condition are bounded. Indeed, if $r=1$, then $n=p_1$ satisfies 
$$
2(\log \, 2)(1-\theta)(p_1-1)\le \log p_1,
$$
hence $p_1$ is bounded. If $r\ge 2$, then the condition 
$$ 
(1-\theta) (\log\, 2)(p_1 -1)(p_2 -1)(p_3 -1) \cdots (p_r -1) \leq 2^{r-2} \log p_1 
$$
shows that $p_1p_2\cdots p_r$ is bounded. 
\end{proof}

The proofs of Theorem $\ref{Thm:2}$ and Corollary $\ref{Corollary}$ will use the following result, the proof of which rests on Proposition $\ref{Proposition:propcn}$. 

\begin{lem}\label{Lemma:varphi>2}
Let $d>2$. There exists an effectively computable positive constant $C(d)$ such that the number of triples $(n,x,y)$ in $\Z^3$ which are satisfying $\varphi(n)\ge d$, $\max\{|x|,|y|\}\ge 2$ and $\Phi_n(x,y)< N$ is bounded by $C(d) N^{2/d}$.
\end{lem}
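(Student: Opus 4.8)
The plan is to bound the number of admissible triples by the product of two elementary counts --- how many values of $n$ can occur, and, for each such $n$, how many pairs $(x,y)$ can occur --- and then to refine the first count slightly so as to absorb a spurious logarithmic factor.

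First I would observe that a triple $(n,x,y)$ counted by the lemma automatically has $n\ge 3$ (since $\varphi(n)\ge d>2$ excludes $n=1$ and $n=2$) and $\max\{|x|,|y|\}\ge 2$, so Theorem \ref{Thm:1} applies to $m:=\Phi_n(x,y)$; here $m<N$, and moreover $m\ge 3^{d/2}>2$ by ($\ref{Equation:sqrt3maxxy}$) together with $\max\{|x|,|y|\}\ge 2$. On one hand, Theorem \ref{Thm:1} gives $\varphi(n)\le \tfrac{2}{\log 3}\log m<\tfrac{2}{\log 3}\log N$, so the lower bound $\varphi(n)>(n/2.685)^{1/1.161}$ recalled after Theorem \ref{Thm:1} --- which is exactly what yields ($\ref{Equation:1.161}$) --- shows that the admissible values of $n$ number at most $O_d\bigl((\log N)^{1.161}\bigr)$. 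On the other hand, ($\ref{Equation:sqrt3maxxy}$) gives $\max\{|x|,|y|\}<\tfrac{2}{\sqrt 3}\,N^{1/\varphi(n)}$, so for a fixed $n$ the number of associated pairs $(x,y)$ is at most $\bigl(\tfrac{4}{\sqrt 3}\,N^{1/\varphi(n)}+1\bigr)^2\le \tfrac{38}{3}\,N^{2/\varphi(n)}$, a crude but effective bound obtained from $(a+1)^2\le 2a^2+2$ and $N\ge 1$.

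Multiplying the two counts would give $O_d\bigl((\log N)^{1.161}N^{2/d}\bigr)$, which is a shade too weak, and the real point of the proof is to dispose of the factor $(\log N)^{1.161}$. To do so I would split the admissible $n$ by the value $k=\varphi(n)\ge k_0:=\lceil d\rceil$. The $n$ with $\varphi(n)=k_0$ form a finite set of cardinality at most $2.685\,k_0^{1.161}$ (again by the lower bound for $\varphi$), and each such $n$ contributes at most $\tfrac{38}{3}N^{2/k_0}\le \tfrac{38}{3}N^{2/d}$ pairs, hence a total of $O_d(N^{2/d})$. Any other admissible $n$ satisfies $\varphi(n)\ge k_0+1$, hence contributes at most $\tfrac{38}{3}N^{2/(k_0+1)}$ pairs, while there remain only $O_d\bigl((\log N)^{1.161}\bigr)$ such $n$; their combined contribution is $O_d\bigl((\log N)^{1.161}N^{2/(k_0+1)}\bigr)$. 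Because $k_0\ge d$ forces $\tfrac{2}{d}-\tfrac{2}{k_0+1}\ge\tfrac{2}{k_0(k_0+1)}>0$, this equals $N^{2/d}\cdot(\log N)^{1.161}N^{-2/(k_0(k_0+1))}$, which tends to $0$, so it is $o(N^{2/d})$. Summing the two pieces, the number of admissible triples is $O_d(N^{2/d})$ for $N$ beyond some effective threshold $N_0(d)$, and for $N\le N_0(d)$ the count is trivially bounded; enlarging the constant, we obtain a bound $C(d)\,N^{2/d}$ valid for all $N\ge 1$, with $C(d)$ effectively computable since Theorem \ref{Thm:1}, Proposition \ref{Proposition:propcn}, the cited lower bound for $\varphi$, and $N_0(d)$ are all effective.

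The only genuine obstacle is this final bookkeeping: one has to peel off the finitely many ``extremal'' values of $n$ (those attaining the least possible value of $\varphi(n)$ above $d$) and treat them separately from the roughly $(\log N)^{1.161}$ remaining ones, exploiting that a fixed positive power of $N$ outgrows any power of $\log N$. Everything else is a routine combination of the inequalities already in hand, chiefly ($\ref{Equation:sqrt3maxxy}$) and hence Proposition \ref{Proposition:propcn}.
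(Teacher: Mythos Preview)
Your proof is correct and follows essentially the same route as the paper's: bound $\varphi(n)$ and $\max\{|x|,|y|\}$ via Theorem~\ref{Thm:1} (equivalently (\ref{Equation:sqrt3maxxy}) and Proposition~\ref{Proposition:propcn}), obtain the preliminary estimate $O_d\bigl((\log N)^{1.161}N^{2/d}\bigr)$, and then peel off the $n$ with $\varphi(n)$ equal to its minimal admissible value so that the remaining $n$ satisfy $\varphi(n)\ge k_0+1$ and the spurious logarithmic factor is absorbed by the gain $N^{2/(k_0+1)-2/d}$. The only cosmetic differences are that you write $k_0=\lceil d\rceil$ to handle non-integer $d$ explicitly and are slightly more careful with the ``$+1$'' in the lattice-point count; the paper's argument is otherwise identical.
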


Given a positive integer $N$ and a binary form $F(X,Y)$ of degree $d$, with integer coefficients and nonero discriminant, denote by $R_F(N)$ 
the number of integers of absolute value at most $N$ which are represented by $F(X,Y)$. In \cite{SY}, the authors quote the foundational work of Fermat, Lagrange, Legendre and Gauss concerning the case where $F$ is a binary quadratic form, and a result of Erd\H{o}s and Mahler (1938) for forms of higher degrees. They prove that for $d\ge 3$, there exists a positive constant $C_F>0$ such that $R_F(N)$
is asymptotic to $C_F N^{2/d}$. 
In Lemma $\ref{Lemma:varphi>2}$, we deal with a sequence of forms having no real zero, 
a situation which is easier to deal with. 

\begin{proof}[\indent \sc Proof of Lemma $\ref{Lemma:varphi>2}$.]
If $m< N$ is represented by $\Phi_n(x,y)$ with $\varphi(n)\ge d$, then we have $\Phi_n(x,y)< N$, hence by ($\ref{Equation:cnmaxxy}$) we have $c_n2^{\varphi(n)}< N$. From Proposition $\ref{Proposition:propcn}$ we deduce $3^{\varphi(n)/2} < N$, whereupon 
$\varphi(n) < (2\log N)/\log 3$. Next, from ($\ref{Equation:sqrt3maxxy}$) we deduce 
$$
\max\{|x|,|y|\} \le \frac{2}{\sqrt{3}}\, m^{1/\varphi(n)} < \frac{2}{\sqrt{3}}\, N^{1/\varphi(n)}\le \frac{2}{\sqrt{3}}\, \,N^{1/d}, 
$$
which proves that for each $n$, the number of $(x,y)$ is bounded by $(16/3)N^{2/d}$. From ($\ref{Equation:1.161}$) we deduce that the number of triples $(n,x,y)$ in $\Z^3$ which satisfy $\varphi(n)\ge d$, $\max\{|x|,|y|\}\ge 2$ and $\Phi_n(x,y)< N$ is bounded by $29 N^{2/d}(\log N)^{1.161}$. 

To complete the proof of Lemma $\ref{Lemma:varphi>2}$, we consider two cases. If there is no $n$ with $\varphi(n)=d$, then we deduce the sharper upper bound $29 N^{2/(d+1)}(\log N)^{1.161}$. If the set $\{n_1,n_2,\dots,n_k\}$ of integers $n$ satisfying $\varphi(n)=d$ is not empty, for $1\le j\le k$ the number of couples $(x,y)$ in $\Z^2$ satisfying $\max\{|x|,|y|\}\ge 2$ and $\Phi_{n_j}(x,y)< N$ 
is bounded by $(16/3)N^{2/d}$, while the number of triples $(n,x,y)$ in $\Z^3$ with $\varphi(n)>d$, $\max\{|x|,|y|\}\ge 2$ and $\Phi_n(x,y)< N$ is bounded by $29 N^{2/(d+1)}(\log N)^{1.161}$. Since $k$ is bounded in terms of $d$, Lemma $\ref{Lemma:varphi>2}$ follows.
\end{proof}

\section{Proof of Theorem \ref{Thm:2} and Corollary \ref{Corollary}}\label{proofthm13}
We start from the easy inequality concerning the cardinality of the union of finite sets. We have 
\begin{multline}\label{mult}
\Bigl\vert \, \bigl\vert \,\mathcal A( \Phi_{\{n\geq 3\}}; N)\, \bigr\vert - \Bigl( \bigl\vert \,\mathcal A( \Phi_{3}; N)\, \bigr\vert +
\bigl\vert \,\mathcal A( \Phi_{4}; N)\, \bigr\vert -\bigl\vert \,\mathcal A( \Phi_{3}; N)\cap \mathcal A( \Phi_{4}; N) \, \bigr\vert \Bigr) \,\Bigl\vert 
\\
\leq \bigl\vert \, \bigcup_{\varphi (n) \geq 4}\mathcal A( \Phi_{n}; N)\, \bigr\vert .
\end{multline}
By Lemma \ref{Lemma:varphi>2} the right--hand side of \eqref{mult} is $O (N^\frac 12)$ which is absorbed by the error term of the formula
\eqref{infexpan}. So we are led to study the cardinalities of three sets $\mathcal A( \Phi_{ 3}; N)$, $ \mathcal A( \Phi_{4}; N)$ and $ \mathcal A( \Phi_{3}; N)\cap \mathcal A( \Phi_{4}; N)$. For algebraic considerations, it is better to consider for $k\in \{3,\, 4\}$ the larger sets 
$$
\tilde{ \mathcal A} (\Phi_k; N) := \bigl\{ m\in \mathbb N\mid m\leq N, m= \Phi_n (x,y) \text{ for some } (x,y)\in \mathbb Z^2 \backslash
\{(0,0)\} 
\bigr\},
$$
which differ from 
$
\mathcal A (\Phi_k; N)$ by at most two terms. In conclusion, the proof of Theorem \ref{Thm:2} will be complete (with $\alpha_h=\alpha_h^{(3)}+\alpha_h^{(4)}$, $h\geq 0$)
as soon as we prove 
\begin{prop}\label{964}
There exist three sequences of real numbers $(\alpha_h^{(3)})$, $(\alpha_h^{(4)} )$ and $(\beta_h)$ ($h\geq 0$) with $\alpha_0 ^{(3)}, \alpha_0^{(4)}$ and $\beta_0 >0$, such that for every 
 for $M \geq 0$, the following equalities holds uniformly for $N \geq 2$
\begin{multline}\label{infexpan3ou4}
\bigl\vert \,\tilde{\mathcal A}( \Phi_{k}; N)\, \bigr\vert = \frac{N}{(\log N)^\frac{1}{2}} 
 \left\{ \, \alpha_0^{(k)} + 
 \frac{\alpha_1^{(k)}}{(\log N)} + \cdots \right.\\ \left. + 
 \frac{\alpha_M^{(k)}}{(\log N)^M}\ + O \left( \frac{1}{(\log N)^{M+1}}\right)
 \right\} \ (k=3, \; 
 4)
\end{multline}
and 
\begin{multline}\label{infexpan3et4}
\bigl\vert \,\tilde{\mathcal A}( \Phi_{ 3}; N)\,\cap \, \tilde{\mathcal A}( \Phi_{ 4}; N)\, \bigr\vert = \frac{N}{(\log N)^\frac{3}{4}} 
 \left\{ \beta_0 + 
 \frac{\beta_1}{\log N}\, + \cdots \right.\\ \left.+\frac{\beta_M}{(\log N)^M}
 + O \left( \frac{1}{(\log N)^{M+1}}\right)
 \right\}.
\end{multline}
\end{prop}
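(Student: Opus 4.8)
The plan is to reduce everything to a classical problem: counting integers up to $N$ that are sums of two squares (for $k=4$) and integers up to $N$ represented by the form $X^2+XY+Y^2$ (for $k=3$), together with the intersection. First I would observe that $\Phi_4(X,Y) = X^2 + Y^2$ and $\Phi_3(X,Y) = X^2 + XY + Y^2$, so $\tilde{\mathcal A}(\Phi_4;N)$ is exactly the set of $m \le N$ that are sums of two squares (with the trivial representation $(0,0)$ excluded, costing at most one element), and $\tilde{\mathcal A}(\Phi_3;N)$ is the set of $m \le N$ represented by the norm form of $\mathbb Z[\zeta_3]$. By the classical characterization, $m$ is a sum of two squares iff every prime $p \equiv 3 \pmod 4$ divides $m$ to an even power, and $m$ is represented by $X^2+XY+Y^2$ iff every prime $p \equiv 2 \pmod 3$ divides $m$ to an even power. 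Both are ``multiplicative'' conditions of Landau type.

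The main analytic input is the Landau--Selberg--Delange method (a Tauberian theorem for Dirichlet series with logarithmic singularities). For $k=4$, the Dirichlet series $\sum_{m \in B_4} m^{-s}$, where $B_4$ is the set of sums of two squares, factors as $\zeta(s)^{1/2}$ times a function holomorphic and nonzero in a neighbourhood of $\Re s = 1$; this yields $|\tilde{\mathcal A}(\Phi_4;N)| \sim c_4 N/(\log N)^{1/2}$, and the Selberg--Delange machinery produces the full asymptotic expansion in descending powers of $\log N$ with an arbitrarily good error term, which is precisely the shape \eqref{infexpan3ou4}. The same argument with the quadratic character modulo $3$ in place of the character modulo $4$ gives the $k=3$ case. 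For the intersection \eqref{infexpan3et4}, I would note that $m$ lies in both sets iff every prime $p\equiv 3\pmod 4$ \emph{and} every prime $p \equiv 2 \pmod 3$ divides $m$ to an even power; the associated Dirichlet series now behaves like $\zeta(s)^{1/4}$ near $s=1$ (two independent congruence conditions, each contributing a ``square-root'' of the primes in an arithmetic progression of density $1/2 \cdot 1/2$... more precisely the exponent is $1 - \frac12\cdot\frac12 - \frac12\cdot\frac12 + \dots$; one must compute the density of primes that are unrestricted, which works out to exponent $3/4$ for the singularity), whence the $(\log N)^{-3/4}$ main term and its expansion. The positivity of $\alpha_0^{(3)}, \alpha_0^{(4)}, \beta_0$ is automatic since these are values of convergent Euler products with positive factors.

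Concretely the steps are: (1) identify $\Phi_3$ and $\Phi_4$ explicitly and translate the representability conditions into multiplicativity constraints on prime factorizations; (2) write down the three Dirichlet series $\sum_{m\in B_k} m^{-s}$ and $\sum_{m \in B_3\cap B_4} m^{-s}$ and factor each as $\zeta(s)^\kappa G(s)$ with $\kappa \in \{1/2, 1/2, 3/4\}$ appropriately (using $L(s,\chi_{-4})$, $L(s,\chi_{-3})$ and their product to absorb the character contributions into the holomorphic factor $G$, which must be shown to extend holomorphically and zero-freely to a region $\Re s > 1 - \delta$); (3) quote the Selberg--Delange theorem (e.g. Tenenbaum, \emph{Introduction to Analytic Number Theory}, or Chapter II.5 there) to convert each factorization into an asymptotic expansion of the counting function with the stated uniform error term $O((\log N)^{-M-1})$; (4) handle the negligible discrepancy between $\mathcal A$ and $\tilde{\mathcal A}$ (at most two integers) and feed the result back through \eqref{mult}. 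The hard part will be step (2)--(3) bookkeeping: verifying that the ``error Euler product'' $G(s)$ really is holomorphic and non-vanishing slightly to the left of $\Re s = 1$ (this requires a little care with the local factors at $p\equiv 3 \pmod 4$ and $p \equiv 2 \pmod 3$, and at the ramified primes $2$ and $3$), and checking that the Selberg--Delange hypotheses are met so that the expansion is genuinely valid to all orders $M$ with the claimed uniformity; everything else is routine.
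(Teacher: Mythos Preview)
Your outline is correct and follows essentially the same route as the paper: characterize the represented integers multiplicatively, factor the associated Dirichlet series as $\zeta(s)^{z}$ times a holomorphic piece built from $L(s,\chi_{-3})$, $L(s,\chi_{-4})$ (and, for the intersection, also the real character modulo~$12$, not just the product of the first two), verify the Selberg--Delange hypotheses via the classical zero-free region together with a Phragm\'en--Lindel\"of convexity bound for the growth condition, and read off the full expansion from Tenenbaum's theorem. One slip to fix: in your step~(2) you write $\kappa\in\{1/2,1/2,3/4\}$, but the $\zeta$-exponent for the intersection is $z=1/4$ (as you correctly state a few lines earlier), and it is $z-1=-3/4$ that produces the factor $(\log N)^{-3/4}$.
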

The proof of this proposition will be achieved in the next three subsections. We will exploit the fact that $\Phi_3$ and $\Phi_4$ are binary quadratic forms, which also are the norms of integers of imaginary quadratic fields with class number one. Finally the characteristic functions of the sets $\tilde{\mathcal A}( \Phi_{k}; \infty)$ for $k\in \{ 3, \, 4 \}$ are studied by analytic methods via the theory of Dirichlet series.

\subsection{Algebraic backgrounds}
We fix some notations. 
 The letter $p$ is reserved for primes. If $a$ and $q$ are two integers, we denote by 
 $N_{a,q}$ any integer $\geq 1$ satisfying the condition
$$
p \mid N_{a,q} \Longrightarrow p \equiv a\bmod q.
$$
\begin{prop}\label{prop1} The following equivalences hold true.
\begin{enumerate} 
\item[{\rm (i)}]
An integer $n\geq 1$ is of the form
$$
n=\Phi_4(x,y)=x^2+y^2
$$
if and only if there exist integers $a\geq 0$, $N_{3,4}$ and $N_{1,4}$ such that
$$
n= 2^a \,N_{3,4}^2\, N_{1,4}.
$$
 \item[{\rm (ii)}]
An integer $n\geq 1 $ is of the form
$$
n=\Phi_3 (u,v) = \Phi_6 (u,-v)=u^2+uv +v^2
$$
if and only if there exist integers $b\geq 0$, $N_{2,3}$ and $N_{1,3}$ such that
$$
n= 3^b \,N_{2,3}^2\, N_{1,3}.
$$
\item[{\rm (iii)}]
An integer $n\geq 1$ is simultaneously of the forms
$$
 n=\Phi_3 (u,v) =u^2+uv +v^2 \text{ and } n=\Phi_4 (x,y) =x^2+y^2 
$$
if and only if there exist integers $a, \, b\geq 0$, $N_{5,12}$, $N_{7,12}$, $N_{11,12}$ and $N_{1,12}$ such that
$$
n= \Bigl(2^a\, 3^b\, N_{5,12}\,N_{7,12}\, N_{11,12}\Bigr)^2 N_{1,12}.
$$
\end{enumerate}
\end{prop}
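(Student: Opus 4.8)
The plan is to reduce each of the three equivalences to the classical theory of which integers are norms from the rings of integers of $\mathbb{Q}(i)$ and $\mathbb{Q}(\sqrt{-3})$, both of which are principal ideal domains (class number one). The key elementary fact underlying all three parts is that $\Phi_4(x,y)=x^2+y^2$ is the norm form of $\mathbb{Z}[i]$ and $\Phi_3(u,v)=u^2+uv+v^2$ is the norm form of $\mathbb{Z}[\omega]$ where $\omega=e^{2\pi i/3}$; consequently $n$ is represented by $\Phi_4$ (resp. by $\Phi_3$) if and only if $n$ is a norm of some element, equivalently of some ideal, in the corresponding ring. Because both rings have class number one, every ideal is principal, so an integer $n$ is a norm of an ideal if and only if it is a norm of an element. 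Thus for part (i) I would invoke the standard characterization: $n=x^2+y^2$ is solvable if and only if every prime $p\equiv 3\bmod 4$ occurs to an even power in $n$; this is exactly the condition that $n$ factor as $2^a N_{3,4}^2 N_{1,4}$, since the primes dividing $N_{1,4}$ are those $\equiv1\bmod 4$ (which split), the prime $2$ ramifies, and the primes $\equiv 3\bmod 4$ (which remain inert) must appear with even exponent, hence are absorbed into the square $N_{3,4}^2$. Part (ii) is entirely parallel, working in $\mathbb{Z}[\omega]$: a prime $p$ splits when $p\equiv 1\bmod 3$, is inert when $p\equiv 2\bmod 3$, and $3$ ramifies, giving the factorization $n=3^b N_{2,3}^2 N_{1,3}$; the identity $\Phi_3(u,v)=\Phi_6(u,-v)$ follows from \eqref{CyclotomicPair}.

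For part (iii), I would combine the two criteria: $n$ is simultaneously of both forms if and only if the exponent of every prime $p\equiv 3\bmod 4$ is even \emph{and} the exponent of every prime $p\equiv 2\bmod 3$ is even. Splitting the primes by their residue class modulo $12$: a prime $p\equiv 1\bmod{12}$ is unconstrained (it splits in both fields); a prime $p\equiv 5\bmod{12}$ is $\equiv1\bmod4$ but $\equiv2\bmod3$, hence its exponent must be even; a prime $p\equiv 7\bmod{12}$ is $\equiv3\bmod4$ but $\equiv1\bmod3$, hence its exponent must be even; a prime $p\equiv 11\bmod{12}$ is $\equiv3\bmod4$ and $\equiv2\bmod3$, hence again constrained to even exponent; and the ramified primes $2$ and $3$ are unconstrained. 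This yields exactly the shape $n=\bigl(2^a 3^b N_{5,12}N_{7,12}N_{11,12}\bigr)^2 N_{1,12}$, once one checks that the square can be distributed so that $2$ and $3$ may carry arbitrary (not necessarily even) exponents — this is why $2^a$ and $3^b$ sit \emph{inside} the square together with the three $N$'s whose own exponents are forced even, and indeed $\bigl(2^a 3^b N_{5,12}N_{7,12}N_{11,12}\bigr)^2$ has arbitrary even powers of $2,3$ which is the same as arbitrary powers of $2^2,3^2$; absorbing one extra factor of $2$ or $3$ if the true exponent is odd is harmless because $2$ and $3$ are themselves norms ($2=\Phi_4(1,1)$ is a norm from $\mathbb{Z}[i]$ only up to the ramified prime, and $3=\Phi_3(1,1)$; more carefully, $2$ is a norm from $\mathbb{Z}[i]$ and $3$ from $\mathbb{Z}[\omega]$, but one must verify $2$ is also a norm of the relevant form and likewise $3$). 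I would present this bookkeeping cleanly by writing $n=\prod p^{v_p(n)}$ and translating the two parity conditions into the asserted factored form.

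The main obstacle I anticipate is purely the combinatorial/notational bookkeeping in part (iii): one must be careful that the integers $N_{a,q}$ are only required to have \emph{all} prime factors in the class $a\bmod q$ (with no constraint on multiplicity), so that the square in the statement is genuinely forced only for the inert-in-one-field primes, while the ramified primes $2,3$ and the totally split primes ($\equiv1\bmod{12}$) are free; getting the placement of $2^a$ and $3^b$ right relative to the square, and checking that no information is lost or spuriously added, is the delicate point. The rest is a direct citation of the two-squares theorem and its $\mathbb{Q}(\sqrt{-3})$ analogue together with the principality of the two rings; I would state these as known and spend the bulk of the write-up on the modulo-$12$ case analysis.
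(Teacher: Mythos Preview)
Your approach is exactly the paper's: cite the sum-of-two-squares theorem via $\Z[i]$ for (i), its analogue via $\Z[\omega]$ for (ii), and combine the two parity conditions modulo $12$ for (iii). Parts (i) and (ii) are fine.

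In part (iii), however, you have a genuine slip. You write that ``the ramified primes $2$ and $3$ are unconstrained'' and then struggle to reconcile this with the fact that $2^a$ and $3^b$ sit \emph{inside} the square in the statement. The resolution is the opposite of what you attempt: $2$ is ramified in $\Z[i]$ but \emph{inert} in $\Z[\omega]$ (since $2\equiv 2\bmod 3$), and $3$ is ramified in $\Z[\omega]$ but \emph{inert} in $\Z[i]$ (since $3\equiv 3\bmod 4$). Hence the $\Phi_3$--condition forces $v_2(n)$ to be even, and the $\Phi_4$--condition forces $v_3(n)$ to be even. Concretely, $2$ is \emph{not} of the form $u^2+uv+v^2$ and $3$ is \emph{not} of the form $x^2+y^2$, so your proposed ``absorb an extra factor of $2$ or $3$ if the exponent is odd'' cannot work. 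Once you correct this, the bookkeeping you were worried about disappears: every prime except those $\equiv 1\bmod 12$ is forced to even exponent, which is precisely the shape $\bigl(2^a\,3^b\,N_{5,12}\,N_{7,12}\,N_{11,12}\bigr)^2 N_{1,12}$, and there is no delicate point left.
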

 Proposition \ref{prop1}(i) is famous (see \cite[Theorem 366]{H--W} for instance).
It can be proved by detecting primes in the ring of the Gaussian integers $\mathbb Z [i]$ of the quadratic field $\mathbb Q (i)$. This ring is principal and 
 the norm of the element $x+iy$ is given by the quadratic form $\Phi_4(x,y) =x^2+y^2$. The quadratic field $\mathbb Q (\sqrt{-3})$ has similar properties: its associated ring of integers is a principal domain equal to $\mathbb Z [j]$ with $j=(-1+ \sqrt {-3})/2$. The primes of $\mathbb Z [j]$ 
 (also called Eisenstein primes) are detected by the values of the Kronecker symbol $(-3/p)$ and the norm of the element $u+vj$ of $\mathbb Z [j]$ is equal to $\Phi_3 (u, -v) = \Phi_6 (u,v) =u^2+uv +v^2$. This gives Proposition \ref{prop1}(ii). For instance this statement is a particular case of \cite[Th\' eor\`eme 3, p. 267]{BoCha}
and it is implicitely contained in \cite [Theorem 254]{H--W}, 
\cite[Exercise 2, p. 308]{Hu}.

Combining Proposition \ref{prop1}(i)and \ref{prop1}(ii), we deduce Proposition \ref{prop1}(iii) directly. \qed
\subsection{Analytic background}
Our main tool is based on the Selberg--Delange method. The following version is a weakened form of the quite general result due to Tenenbaum (see \cite[Theorem 3, p. 185]{Ten}).
 It gives an asymptotic expansion of the summatory function of a sequence $(a_n)$ when the attached Dirichlet series can be approached by some power of the $\zeta$-function
in a domain slightly larger than the half--plane $\{s\in \mathbb C\mid \Re s \geq 1\}$. We have 
\begin{prop}\label{bookTen} Let $s=\sigma +it$ be the complex variable and let 
$$
F(s):= \sum_{n\geq 1} a_n n^{-s}
$$be a Dirichlet series such that

\noindent $ \bullet$ the coefficients $a_n$ are real nonnegative numbers,

\noindent $ \bullet$ there exist $z \in \mathbb C$, $c_0 >0$, $\delta >0$ and $K>0$, such that the function 
$$
G(s):= F(s) \zeta (s)^{-z}
$$
 has a holomorphic continuation in the domain $\mathcal D$ of the complex plane, defined by the inequality
\begin{equation}\label{domain}\sigma > 1 -\frac{c_0}{ 1 +\log (1+ \vert t \vert)}\, ,
\end{equation} and satisfies the inequality
\begin{equation}\label{ineq}
\vert G(s)\vert \leq K (1+ \vert t \vert)^{1-\delta}
\end{equation}
for every $s \in \mathcal D$.

Then there exists a sequence of real numbers $(\lambda_k)$ ($k\geq 0$) such that
for all $M\geq 1$, uniformly for $x \geq 2$, we have the equality
$$
\sum_{1\leq n \leq x} a_n =x (\log x)^{z-1} \left\{ \sum_{0\leq k \leq M}\frac{\lambda_k}{(\log x)^k} + O \Bigl( \frac{1}{(\log x)^{M+1}}\Bigr)\,\right\}\cdot 
$$
In particular, we have the equality 
$$
\lambda_0= \frac{1}{\Gamma( z)} G(1).
$$
\end{prop}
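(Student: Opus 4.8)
The plan is to carry out the Selberg--Delange method; the shape \eqref{domain} of the domain $\mathcal D$ is precisely that of the classical zero--free region of $\zeta$, which is what makes the argument run. First I would apply a truncated Perron formula: for $c=1+1/\log x$ and a parameter $T\ge 2$,
$$
\sum_{1\le n\le x}a_n=\frac{1}{2\pi i}\int_{c-iT}^{c+iT}F(s)\,\frac{x^{s}}{s}\,ds
+O\!\left(\sum_{n\ge 1}a_n\Bigl(\frac{x}{n}\Bigr)^{c}\min\Bigl(1,\frac{1}{T|\log(x/n)|}\Bigr)\right).
$$
Here the hypothesis $a_n\ge 0$ is essential: together with Landau's theorem it forces the abscissa of (absolute) convergence of $\sum a_n n^{-s}$ to be $\le 1$ (since $F=G\zeta^{z}$ is holomorphic on $\{\sigma>1\}$ and on all of $\mathcal D\setminus\{1\}$), so the Dirichlet series behaves well just to the right of $\sigma=1$ and the Perron error is controlled in the usual way once $T$ is chosen below.

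Next I would factor $\zeta(s)^{z}=Z(s)^{z}(s-1)^{-z}$, where $Z(s):=(s-1)\zeta(s)$ is entire with $Z(1)=1$, and $(s-1)^{-z}$ is the principal branch cut along $(-\infty,1]$; note that $\zeta(s)^{z}=\exp\bigl(z\log\zeta(s)\bigr)$ makes sense on $\mathcal D$ exactly because $\zeta$ is zero--free there. Using the standard bounds $|\zeta(s)|\ll\log(2+|t|)$ and $1/|\zeta(s)|\ll\log(2+|t|)$ in the zero--free region, together with the hypothesis $|G(s)|\le K(1+|t|)^{1-\delta}$ of \eqref{ineq}, I would shift $[c-iT,c+iT]$ (taking $T=\exp(\sqrt{\log x})$, say) onto a truncated Hankel contour $\mathcal H$ that wraps the cut: two horizontal edges at heights $\pm\eta$ with $\eta\asymp 1/\log x$ running leftward from $\sigma=1$ to $\sigma\asymp 1-c_{0}$, joined by a circle of radius $\asymp 1/\log x$ about $s=1$ and by short vertical pieces. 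The far vertical and the large-$|s-1|$ horizontal pieces contribute $O\bigl(x\exp(-c_{1}\sqrt{\log x})\bigr)$, which, like the Perron error, is dominated by $x(\log x)^{z-1-M-1}$ for every $M$ and disappears into the stated error term.

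On $\mathcal H$ I substitute $w=(s-1)\log x$, so $x^{s}=x\,e^{w}$, $ds=dw/\log x$, and $(s-1)^{-z}=w^{-z}(\log x)^{z}$; the main integral becomes $\dfrac{x}{2\pi i}\displaystyle\int_{\mathcal H'}g(s)\,w^{-z}e^{w}\,\dfrac{dw}{\log x}$ over a Hankel contour $\mathcal H'$ of size $\asymp 1$ around $w=0$, where $g(s):=G(s)Z(s)^{z}/s$ is holomorphic near $s=1$ with $g(1)=G(1)$. Writing $g(s)=\sum_{k\ge 0}\mu_{k}(s-1)^{k}=\sum_{k}\mu_{k}(\log x)^{-k}w^{k}$ with Taylor remainder of order $M+1$ (controlled on the small circle), completing each truncated Hankel integral to the full one at the cost of a negligible error, and invoking $\dfrac{1}{2\pi i}\displaystyle\int_{\mathcal H'}w^{k-z}e^{w}\,dw=\dfrac{1}{\Gamma(z-k)}$, the $k$-th term contributes $\mu_{k}\,x\,(\log x)^{z-1-k}/\Gamma(z-k)$. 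This produces the asserted expansion with $\lambda_{k}=\mu_{k}/\Gamma(z-k)$, and in particular $\lambda_{0}=g(1)/\Gamma(z)=G(1)/\Gamma(z)$ since $Z(1)=1$. The main obstacle is purely technical: making every estimate — the Perron truncation, the contour shift governed by the zero--free region, and the Taylor remainder — uniform in $x\ge 2$ so that one implied constant handles the $O\bigl((\log x)^{-M-1}\bigr)$ inside the braces; this uniform bookkeeping is exactly the content of the cited theorem of Tenenbaum, from which the weakened form stated here is immediate.
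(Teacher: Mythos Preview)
The paper does not prove this proposition at all: it is stated as ``a weakened form of the quite general result due to Tenenbaum'' with a reference to \cite[Theorem 3, p.~185]{Ten}, and the proof is simply the citation. Your proposal is a correct outline of the standard Selberg--Delange argument (Perron truncation, contour shift across the zero--free region onto a Hankel loop, Taylor expansion of the holomorphic factor, Hankel's formula for $1/\Gamma$), and you yourself close by deferring the uniform bookkeeping to Tenenbaum's theorem --- so in the end you and the paper agree. The extra sketch you provide is sound and adds context the paper omits; there is no gap.
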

\subsection{Proof of Proposition \ref{964}} We restrict ourselves to the proof of \eqref{infexpan3et4} since the proof of \eqref{infexpan3ou4} is simpler. 
Let $\xi_n$ be the characteristic function of the set of integers $n\geq 1$ which are simultaneously represented by $\Phi_3$ and $\Phi_4$.
Let $F(s)= \sum_n \xi_n n^{-s}$ be the associated Dirichlet series. Note the equality
$$ \bigl\vert \,\tilde{\mathcal A}( \Phi_{ 3}; N)\,\cap \, \tilde{\mathcal A}( \Phi_{ 4}; N)\, \bigr\vert 
= \sum_{n\leq N} \xi_n.
$$
 By the third part of Proposition \ref{prop1}, $F(s)$ factorizes in the product
\begin{equation}\label{1064}
F(s)= H(s) \Pi (s) 
\end{equation}
 with 
 \begin{align}\label{defH}
 H(s)&= \left(1 -\frac{1}{4^s}\right)^{-1}\left(1 -\frac{1}{9^s}\right)^{-1}
\prod_{p\, \equiv \, 5,\, 7,\, 11 \bmod 12} \left(1 -\frac{1}{p^{2s}}\right)^{-1},\\
\Pi (s)& = \prod_{p\, \equiv \, 1 \bmod 12} \left(1 -\frac{1}{p^s}\right)^{-1}.\label{defPi}
 \end{align} 

 The function $H$ is holomorphic for $\sigma >1/2$ and uniformly bounded for $\sigma \geq 3/4$. The infinite product $\Pi (s)$ is absolutely convergent for $\sigma >1$ and we want to study the behavior of this product in the vicinity of the singularity $s=1$. 
 To detect among the primes $p\ge 5$ those which are congruent either to $1$ modulo $12$
or to $5,7,11$ modulo 12, we use the formula 
 \begin{equation}\label{?}
 \frac{1}{4} \left( \ 1 + \left(\frac{-3}{p}\right) + \left( \frac{-4}{p} \right)+ \left(\frac{12}{p}\right) \, \right)=
 \begin{cases} 1 & \text { if } p \equiv 1 \bmod 12,\\
 0 & \text{ if } p\equiv 5, \, 7 ,\, 11 \bmod 12.
 \end{cases}
 \end{equation} 
 Inserting \eqref{?} into \eqref{defPi}, we deduce that for $\sigma >1$ we have the equality
 \begin{multline*}
 \Pi (s) = \prod_{p \geq 5} \left\{ \left( 1-\frac{1}{p^s}\right) 
 \left( 1-\frac{(-3/p)}{p^s}\right) \left( 1-\frac{(-4/p)}{p^s}\right) \left( 1-\frac{(12/p)}{p^s}\right)\right\}^{- \frac{1}{4}}\\
 \times \prod_{p\, \equiv \, 5,\, 7,\, 11 \bmod 12} 
 \left( 1 -\frac{1}{p^{2s}}\right)^\frac{1}{2}\cdot
 \end{multline*}
 Completing the first infinite product with the factors associated with the primes $p=2$ and $p=3$ to obtain the $\zeta$--function and some $L$--functions, we deduce that for $\sigma >1$, $\Pi (s)$ satisfies the equality
 \begin{equation}\label{equaPi}
 \Pi (s) = H_1 (s) \, \zeta(s)^\frac{1}{4} \,L(s, (-3/\cdot))^\frac{1}{4}\, L(s, (-4/\cdot))^\frac{1}{4}\, L(s, (12/\cdot))^\frac{1}{4},
 \end{equation}
 with
 $$
 H_1 (s) = \left(1 -\frac{1}{4^s}\right)^\frac{1}{4} \, \left(1-\frac{1}{9^s}\right)^\frac{1}{4}\, \prod_{p\, \equiv \, 5,\, 7,\, 11 \bmod 12} 
 \left( 1 -\frac{1}{p^{2s}}\right)^\frac{1}{2}.
 $$
 By \eqref{1064}, \eqref{defH}, \eqref{defPi} and \eqref{equaPi}, we deduce that $F(s)$ satisfies for $\sigma >1$ the equality
 \begin{equation}\label{equaF}
 F(s) = H_2 (s) \zeta(s)^\frac{1}{4} \,L(s, (-3/\cdot))^\frac{1}{4}\, L(s, (-4/\cdot))^\frac{1}{4}\, L(s, (12/\cdot))^\frac{1}{4},
 \end{equation}
 with 
 $$
 H_2 (s) = \Bigl(1 -\frac{1}{4^s}\Bigr)^{-\frac{3}{4}} \, \Bigr(1-\frac{1}{9^s}\Bigr)^{-\frac{3}{4}}\, \prod_{p\equiv 5,\, 7,\, 11 \bmod 12} \Bigl( 1 -\frac{1}{p^{2s}}\Bigr)^{-\frac{1}{2}}.
$$
The function $H_2$ is holomorphic for $\sigma >1/2$ and uniformly bounded for $\sigma \geq 3/4$.

 By the classical zero--free region of the Dirichlet $L$--functions, there exists $c_0 >0$ such that in
 the domain $\mathcal D$ defined in \eqref{domain}, the function $$L(s, (-3/\cdot))\, L(s, (-4/\cdot))\, L(s, (12/\cdot))$$ does not vanish. This implies that the function $$G(s):= F(s) \zeta (s)^{-\frac{1}{4}} =H_2 (s)\,L(s, (-3/\cdot))^\frac{1}{4}\, L(s, (-4/\cdot))^\frac{1}{4}\, L(s, (12/\cdot))^\frac{1}{4}
$$
can be extended to a holomorphic function on $\mathcal D$, satisfying the inequality \eqref{ineq}, with $\delta =1/2$, as a consequence of the functional equation and the Phragmen--Lindel\"of convexity principle (see
\cite[Exercise 3, p. 100]{IK} for instance). 

All the conditions of Proposition \ref{bookTen} are satisfied with $z =1/4$ and we obtain \eqref{infexpan3et4} with
$$
\beta_0= H_2 (1) \,L(1, (-3/\cdot))^{\frac{1}{4}}\, L(1, (-4/\cdot))^{\frac{1}{4}}\, L(1, (12/\cdot))^{\frac{1}{4}}/ \Gamma (1/4),
$$
which can be written as 
\begin{multline*}
\beta_0 = 
\Bigl( \frac{3}{2} \Bigr)^\frac{3}{4} \cdot \frac{1}{\Gamma(1/4)}\\
\times 
 \,L(1, (-3/\cdot))^{\frac{1}{4}}\; L(1, (-4/\cdot))^{\frac{1}{4}}\; L(1, (12/\cdot))^{\frac{1}{4}}
 \prod_{p\, \equiv \, 5,\, 7,\, 11 \bmod 12} \left(1 -\frac{1}{p^{2}}\right)^{-\frac 12} \cdot 
\end{multline*}	
Since [OEIS A101455, A073010, A196530]
$$
L(1, (-4/\cdot))= \frac{\pi}{4},
\quad
L(1, (-3/\cdot))= \frac{\pi}{3^{\frac 32}}
\quad\hbox{and}\quad
L(1, (12/\cdot))= \frac{\log (2+\sqrt{3})}{\sqrt{3}},
$$
we deduce
$$
\beta_0=
\frac {3^{\frac 14}}{2^{\frac 54}}\cdot \pi^{\frac 12} \cdot (\log (2+\sqrt{3}) )^{\frac 14} \cdot\frac{1}{\Gamma(1/4)}\cdot
\prod_{p\, \equiv\, 5,\, 7,\, 11 \bmod 12} \Bigl(1 -\frac{1}{p^{2}}\Bigr)^{-\frac 12} .
$$

The proof of \eqref{infexpan3ou4} for $k=3$ and $k =4$ is simpler since the formula to detect the congruences $p\equiv 1 \mod 3$ and $p\equiv 1\bmod 4$
contains only two terms instead of four as in \eqref{?}. In both cases $k=3$ and $k=4$, the parameter $z$ has the value $z=1/2$. This gives (6.2) with 
$$
\alpha_0^{(3)}=
\frac{1}{2^{\frac 12}3^{\frac 14}}\cdot \prod_{p\, \equiv\, 2 \bmod 3} \left(1 -\frac{1}{p^{2}}\right)^{-\frac 12}
$$
and
$$
\alpha_0^{(4)}=
 \frac{1}{2^{\frac 12}}\cdot \prod_{p\, \equiv \, 3 \bmod 4} \left(1 -\frac{1}{p^{2}}\right)^{-\frac 12}. 
 $$ 
Finally, \eqref{infexpan3ou4} is a detailed version of Landau's formula which states that for $N$ tending to infinity, we have 
$$
\bigl\vert \,\tilde{\mathcal A}( \Phi_{4}; N)\, \bigr\vert \sim C \frac{N}{\sqrt{\log N}},
$$
 where $C=\alpha_0^{(4)}=0.7 6 4\, 2 2 3 \,6 5 3 \, 5 8 9 \,2 20\dots $ is the Landau--Ramanujan constant (cf. 
\cite [pp 257-263]{LeVeque} and 
[OEIS A000404,
OEIS A064533]%
). 
Using Pari GP \cite{PariGP}, one checks that the first decimal digits of $\alpha_0^{(3)}$ are $ 0. 638\, 909$, while the first decimal digits of $\beta_0$ are $ 0. 302\, 316$.

\subsection{
 Proof of Corollary $\ref{Corollary}$}
For $N\ge 1$, $a_1+\cdots+a_N$ counts the number of triples $(n,x,y)$ with $n\ge 3$, $\max\{|x|,|y|\}\ge 2$ and $\Phi_n(x,y)\le N$. 
The number of these triples $(n,x,y)$ with $n=4$ is asymptotically $\pi N$.
The number of these triples with $n=3$ is asymptotically $(\pi/\sqrt{3}) N$, and it is the same for $n=6$.
The number of these triples with $\varphi(n)>2$ is $o(N)$, as shown by Lemma $\ref{Lemma:varphi>2}$. Hence 
$$
 a_1+\cdots+a_N\sim \left(1+\frac{2}{\sqrt{3}}\right)\pi N
$$ 
and Corollary $\ref{Corollary}$ with 
$$
\kappa_1=\frac{\pi}{\alpha_0}\left(1+\frac{2}{\sqrt{3}}\right)
$$
 follows from Theorem $\ref{Thm:2}$.
\qed

\section{Numerical computations}\label{S:NumericalComputations}

From the inequalities in ($\ref{Equation:sqrt3maxxy}$), we deduce that the assumptions $n\ge 3$, $\Phi_n(x,y)\le 20$ and $\max\{|x|,|y|\}\ge 2$ imply 
$$
\left(\frac{\sqrt{3}}{2}
\max\{|x|,|y|\}\right)^{\varphi(n)}\le 20.
$$
We deduce firstly $3^{\varphi(n)/2}\le 20$, hence $\varphi(n)\le 4$, and secondly 
$$
\max\{|x|,|y|\}\le 2\sqrt{20/3},
$$
hence $\max\{|x|,|y|\}\le 5$. It is now again a simple matter of computation with MAPLE \cite{MAPLE} to complete the rest of Table $1$. 
For instance, one can find in Table 4 the values of $(x,y)$ which are the only ones satisfying the stronger condition $\Phi_n(x,y)\le 10$.

\medskip

\begin{small}
\begin{align}
\notag
&
m=3: \; n=3, \; (x,y)=(1,-2), \; (-1,2),\; (2,-1),\; (-2,1), 
\\
\notag
&
m=3:\; n=6,\; (x,y)=(1,2),\; (-1,-2),\; (2,1),\; (-2,-1);
\\
\notag
&
m=4: \; n=3,\; (x,y)=(0,2), \; (0,-2), \; (2,0), \; (2,-2), \; (-2,0), \;(-2,2), \\
\notag
&
m=4: \; n=4,\; (x,y)=(0,2),\; (0,-2),\; (2,0),\; (-2,0), 
\\
\notag
&
m=4:, \; n=6, \;(x,y)=(0,2), \;(0,-2), \;(2,0), \;(2,2), \;(-2,0), \;(-2,-2);
\\
\notag
& 
m=5: \; n=4, \;
(x,y)=(1,2), \;(1,-2), \;(-1,2), \;(-1,-2),\; (2,1), \;(2,-1), \ 
\\ \notag &\qquad\qquad \qquad\qquad \qquad \;\;\; (-2,1), \;(-2,-1);\\
\notag &
m=7: \;n=3, \;
(x,y)=(1,2), \;(1,-3), \;(-1,3), \;(-1,-2), \;(-3,1), \;(3,-1), 
\\
\notag
& 
\qquad\qquad \qquad\qquad \qquad\;
\;(2,1), \;(2,-3), \;(-2,3), \;(-2,-1), \;
(3,-2), \;(-3,2),
\\
\notag
& 
m=7: \;n=6, \;
(x,y)=(1,3), \; (1,-2), \;(-1,2), \; (-1,-3), \;(3,1), \;(-3,-1), \;
\\
\notag
& 
\qquad\qquad \qquad\qquad \qquad\;
\;(2,1), \;(2,-1), \; (2,3), \;(-2,-3), \;(3,2), \;(-3,-2);
\\
\notag
& 
m=8: \; n=4, \;
(x,y)=(2,2), \;(2,-2), \;(-2,2), \;(-2,-2);
\\
\notag
& 
m=9: \;n=3
, \;(x,y)=(0,3), \;(0,-3),\;(3,0), \;(3,3), \;(-3,0),\;(-3,3),
\\
\notag
& 
m=9: \;n=4
, \;(x,y)=(0,3), \;(0,-3),\;(3,0), \;(-3,0),
\\
\notag
& 
m=9: \;n=6
, \;(x,y)=(0,3), \;(0,-3),\;(3,0), \;(3,3), \;(-3,0),\;(-3,3);
\\
\notag
& 
m=10: n=4, \,
(x,y)=(1,3), \;(1,-3), \;(-1,3), \;(-1,-3), \;(3,1), \;(3,-1), \\
\notag 
& \qquad\qquad \qquad\qquad \qquad\;\;\, (-3,1), \;(-3,-1).
\end{align}
\begin{center}
{\bf Table 4} 
\end{center} \end{small} 

 With similar calculations, we obtain Table $2$. The triples $(n,x,y)$ which contribute to Table $2$ satisfy $\varphi(n)\in\{4,6\}$ and $\max\{|x|,|y|\}\in\{2,3\}$.

 Notice that given $h\ge 3$, the smallest value $m_h$ of $m$ for which there exists $(n,x,y)$ with $n\ge 2$, $\max\{|x|,|y|\}\ge h$ and $\Phi_n(x,y)=m$ is 
$$
m_h=\left\{ \begin{array}{llllll}
\displaystyle
\Phi_3\left(\frac{h-1}{2},-h\right)&=&\displaystyle\Phi_3\left(\frac{h+1}{2},-h\right)&=&
\displaystyle
\frac{3h^2+1}{4}&\mbox{\rm if}\; 2\nmid h,\\[5mm]
\displaystyle
\Phi_3\left(\frac{h}{2},-h\right)&=&\displaystyle \frac {3h^2 }{4}&&&\mbox{\rm if}\; 2\mid h.\\
\end{array} \right.
$$


 \bigskip 

 {{\bf Acknowledgements.}}
This work was initiated in Lecce in June 2016, and was pursued at the University of the Philippines at Dilliman during a SEAMS school; the second and third authors are grateful to Fidel Nemenzo for the stimulating environment. 
Last, but not least, many thanks to K\'alm\'an Gy\H{o}ry for drawing our attention to his paper \cite{Gyory}
and for his valuable remarks. The second author was supported by an NSERC grant.

\end{document}